\documentclass[12pt,leqno]{amsart}
\usepackage{graphicx}
\usepackage{amsfonts,amsthm,amsmath,mathtools,amssymb,comment,xcolor,enumitem}
\usepackage[unicode]{hyperref}
\usepackage{cleveref}
\usepackage{ascmac}
\usepackage{mathtools}
\usepackage{soul}

\theoremstyle{plain}
\newtheorem{thm}{Theorem}[section]

\newtheorem{lem}{Lemma}[section]
\newtheorem{cor}{Corollary}[section]

\theoremstyle{definition}
\newtheorem{df}{Definition}[section]
\newtheorem{rem}{Remark}[section]

\newcommand{\FF}{\mathbb{F}}

\newcommand{\allone}{\mathbf{1}}



\begin{document}

\title{Average hitting times in some $f$-equitable graphs}

\author{Yusaku Nishimura}
\address{School of Fundamental Science and Engineering, Waseda University, Tokyo 169-8555, Japan}
\email{n2357y@ruri.waseda.jp}
\date{}

\begin{abstract}
  It is known that the average hitting times of simple random walks from any vertex to any other vertex in distance-regular graphs are determined by their intersection array.
  In this paper, we introduce a new graph classification called $f$-equitable, utilizing both the equitable partition and the function $f$, which represents a generalization of distance-regular graphs. 
  We determine the average hitting times from any vertex to any other vertex in $f$-equitable graphs by using their parameter referred to as the quotient matrix. 
  Furthermore, we prove that there is some function $f$ such that the Cartesian product of two strongly regular graphs is $f$-equitable.
  We then calculate the quotient matrix for these graphs and determine the average hitting times from any vertex to any other vertex in these graphs.
  In the same manner, we determine the average hitting times on some generalized Paley graphs.
\end{abstract}

\maketitle

	

\section{Introduction}

A \textit{random walk} on a graph is a stochastic process that moves randomly from vertex to vertex along edges. 
Specifically, when each vertex is chosen with equal probability during a random walk, it is referred to as a \textit{simple random walk}.
For any given vertices $u$ and $v$ in a graph, the \textit{hitting time} from vertex $u$ to vertex $v$ is defined as the number of steps required to reach vertex $v$ 
for the first time in a simple random walk starting from vertex $u$.
The expected value of the hitting times from vertex $u$ to vertex $v$ is called the \textit{average hitting time} from $u$ to $v$ and is denoted as $H(G;(v,u))$. We define $H(G;(v,v))=0$.

It is known that we can compute the average hitting time between any two vertices easily in certain types of graphs. 
A notable example is distance-regular graphs, which belong to a class of graphs with high symmetry.
Biggs~\cite{biggs1993}, Devroye and Sbihi~\cite{Devroye1990}, and Van Slijpe~\cite{Slijpe} independently demonstrated that 
the average hitting times between any two vertices in a distance-regular graph can be computed solely using the intersection array of the graph, 
which is a set of parameters defining the distance-regular graph. 
In this paper, we propose a new graph classification denoted as $f$-equitable.
This classification can be regarded as a generalization of distance-regular graphs and also as a generalization of association schemes.

The detailed definition of $f$-equitable is given in \Cref{sec:stab}.

In the following, we denote $\vec{\allone}$ as the all-one vector and $I$ as the identity matrix.
We determine the average hitting time from any vertex to any vertex in an $f$-equitable graph by using its parameter, denoted as the quotient matrix.
\begin{thm}\label{thm:equi}
  Let $G$ be an $f$-equitable graph with the quotient matrix $Q$ and valency $k$, 
  where the range of the function $f$ is $\{x_0, x_1, \ldots, x_r\}$, and $Q$ is indexed in the order of $\{x_0, x_1, \ldots, x_r\}$. 
  Let $h_{x_0}=0$, and we define $h_{x_i}$ as follows:
  \begin{align*}
    \vec{H}&=(h_{x_1},h_{x_2},\ldots,h_{x_r})\\
    &=-\vec{\allone}\left(\frac{1}{k}Q^--I\right)^{-1},  
  \end{align*}
  where $Q^-$ is the submatrix obtained by removing the $x_0$th row and the $x_0$th column from $Q$. 
  Then, for any vertices $u$ and $v$ in $G$,
  \[
  H(G;(v,u))=h_{f(v,u)}.
  \]
\end{thm}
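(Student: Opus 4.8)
The plan is to combine \emph{first-step analysis} for the hitting times with the defining property of an $f$-equitable graph, and then to pin the values down via the uniqueness of the solution to the resulting linear system. Conditioning on the first step of a simple random walk started at $u \neq v$ and using that $G$ is $k$-regular, the average hitting times satisfy the recurrence
\[
H(G;(v,u)) = 1 + \frac{1}{k}\sum_{w \sim u} H(G;(v,w)),
\]
together with the boundary condition $H(G;(v,v)) = 0$. For a connected $G$ this system determines the family $\{H(G;(v,u))\}_u$ uniquely, a standard fact I will invoke. So it suffices to exhibit \emph{some} solution of the desired shape and then appeal to uniqueness.

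First I would fix the target $v$ and partition $V(G)$ into the cells $C_{x_i} = \{u : f(v,u) = x_i\}$, where $C_{x_0} = \{v\}$. The $f$-equitable hypothesis guarantees that this is an equitable partition with quotient matrix $Q$ \emph{independent of $v$}: the number of neighbours in $C_{x_j}$ of a vertex in $C_{x_i}$ is a constant, recorded by the entry $Q_{x_i x_j}$. I would then \emph{guess} that $H(G;(v,u))$ depends on $u$ only through $f(v,u)$, setting $\widetilde H(v,u) := h_{f(v,u)}$ for scalars $h_{x_0},\ldots,h_{x_r}$ with $h_{x_0}=0$.

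Substituting $\widetilde H$ into the recurrence and grouping the neighbours of $u \in C_{x_i}$ by the cell containing them, the sum $\frac{1}{k}\sum_{w\sim u} h_{f(v,w)}$ collapses to $\frac{1}{k}\sum_{j} Q_{x_i x_j}\, h_{x_j}$ by equitability, a quantity that is crucially independent of which $u \in C_{x_i}$ and which $v$ we chose. Hence $\widetilde H$ solves the full recurrence exactly when $\vec H = (h_{x_1},\ldots,h_{x_r})$ satisfies the reduced system
\[
h_{x_i} = 1 + \frac{1}{k}\sum_{j=1}^{r} Q_{x_i x_j}\, h_{x_j} \qquad (1 \le i \le r),
\]
in which the $j=0$ term vanishes because $h_{x_0}=0$. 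Assembling these $r$ equations gives $\vec H\left(\frac{1}{k}Q^- - I\right) = -\vec{\allone}$, which is the stated formula provided the coefficient matrix is invertible. Uniqueness of the hitting-time system then forces the $\widetilde H$ so produced to coincide with the true $H$, yielding $H(G;(v,u)) = h_{f(v,u)}$.

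I expect the main obstacle to be the invertibility of $\frac{1}{k}Q^- - I$. Here I would observe that $\frac{1}{k}Q$ is the (stochastic) transition matrix of the lumped walk on the cells, so $\frac{1}{k}Q^-$ is the substochastic matrix obtained by deleting the absorbing cell $C_{x_0}=\{v\}$; connectivity of $G$ makes this cell reachable from every other cell, so the spectral radius of $\frac{1}{k}Q^-$ is strictly below $1$ and $I - \frac{1}{k}Q^-$ is a nonsingular M-matrix. A secondary, purely bookkeeping point is to check that the row-vector equation produced by the recurrence matches the left-multiplication $-\vec{\allone}\left(\frac{1}{k}Q^- - I\right)^{-1}$ in the theorem, i.e. to reconcile the transpose/indexing convention once the precise meaning of the entries of $Q$ is fixed.
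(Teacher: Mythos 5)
Your argument is correct and arrives at the same reduced linear system as the paper, but the logical packaging differs enough to be worth noting. The paper first \emph{lumps} the walk: it observes that for an equitable partition the cell-valued process is itself a Markov chain with transition matrix $T=\frac{1}{k}Q$, defines hitting times between cells, shows in \Cref{thm:main} that hitting the singleton cell $F_{x_0}(v)=\{v\}$ is the same event as hitting $v$, and then performs first-step analysis on the quotient chain in \Cref{eq:ht} to obtain $\vec{H}=-\vec{\allone}(T^--I)^{-1}$. You instead keep the first-step analysis at the level of individual vertices, posit the ansatz $H(G;(v,u))=h_{f(v,u)}$, use equitability to collapse the substituted equations to the $r\times r$ system $\vec{H}\left(\frac{1}{k}Q^--I\right)=-\vec{\allone}$, and then invoke uniqueness of the solution of the absorbing-chain system to identify the ansatz with the true hitting times. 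The two routes rest on exactly the same equitability computation, so in substance they are equivalent; yours trades the paper's lumpability/well-definedness step for a uniqueness-plus-ansatz step. One genuine advantage of your version is that you explicitly confront the invertibility of $I-\frac{1}{k}Q^-$ (substochasticity plus reachability of the absorbing cell), a point the paper passes over silently when it writes $(T^--I)^{-1}$; your remark also makes visible that connectivity of $G$ is an implicit hypothesis of the theorem, since for a disconnected $f$-equitable graph that matrix can be singular and the hitting times infinite. Your final ``bookkeeping'' worry about row versus column conventions is real but resolves correctly: with the paper's convention $q_{ij}=|N(v)\cap V_i|$ for $v\in V_j$, grouping the neighbours of $u\in F_{x_j}(v)$ gives $h_{x_j}=1+\frac{1}{k}\sum_i q_{x_i x_j}h_{x_i}$, which is precisely the left-vector equation $\vec{H}\,\frac{1}{k}Q^-+\vec{\allone}=\vec{H}$ used in the statement.
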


\Cref{thm:equi} implies that if $G$ is $f$-equitable, then the average hitting time between any two vertices in $G$ is determined using only the quotient matrix and the function $f$.
Using this method, we can calculate the average hitting time in the Cartesian product of any two strongly regular graphs 
and specific generalized Paley graphs using only these parameters, which was previously unknown.
In the same manner, the detailed definitions of these graphs and parameters are provided in \Cref{sec:drg,sec:srg,sec:genPaley}.
Note that these graphs are obtained by selecting multiple relations from an association scheme and treating them as the edge set. 
Generally, the graph obtained through such operations is an $f$-equitable, where $f$ is a function that returns the index of relation containing a pair of vertices.
Therefore, \Cref{thm:equi} can be applied to all graphs generated by such operations, which is a much broader class than distance regular graphs.

This paper is organized as follows.
In \Cref{sec:equitable}, we provide the definition of an equitable partition and discuss some of its properties.
In \Cref{sec:stab}, we define $f$-equitable and present the proof for \Cref{thm:equi}.
In \Cref{sec:drg}, we calculate the average hitting time in distance-regular graphs by applying \Cref{thm:equi}, and confirm that the results are consistent with those previously obtained.
In \Cref{sec:srg,sec:genPaley}, we calculate the average hitting time in the Cartesian product of two strongly regular graphs and specific generalized Paley graphs, respectively. 
The actual calculations were performed using Mathematica~\cite{Mathematica}.

\section{Definition of an equitable partition and some properties}\label{sec:equitable}

In the following, $N_G(v)$ denotes the neighbor set of vertex $v$ in graph $G$, and $\deg_G(v)$ denotes the degree of $v$.
When $G$ is understood, we omit it and write $N(v)$ and $\deg(v)$, respectively.  
We also denote $u\sim v$ if vertices $u$ and $v$ are adjacent.

\subsection*{Definition and some properties of an equitable partition }
Let $G$ be a graph and let $P=\{V_0,V_1,\ldots,V_{r}\}$ be a partition of the vertex set of $G$.

\begin{df}[Equitable partition~\cite{10.1007/BFb0066438}]\label{def:equitable}
  The partition $P$ is called an \textit{equitable partition} if, for all integers $i$ and $j$ such that $0\leq i,j\leq r$ and for all vertices $u$ and $v$ in $V_j$, 
  $|N(u)\cap V_i|=|N(v)\cap V_i|$ holds.
\end{df}

\begin{df}[Quotient matrix]
  Let $P$ be an equitable partition of $G$.
  A matrix $q_{ij}=|N(v)\cap V_i|$, where $v$ is for any vertex in $V_j$, is called a \textit{quotient matrix} of $P$.
\end{df}

We can state some properties of the quotient matrix of an equitable partition.

\begin{lem}\label{eq:deg}
  Let $G$ be a graph, and let $P=\{V_0,V_1,\ldots,V_{r}\}$ be an equitable partition of $G$ with quotient matrix $q_{ij}$. 
  \begin{equation*}
    v\in V_j \Longrightarrow \deg(v)=\sum_{i=0}^{r}q_{ij}. 
  \end{equation*}
\end{lem}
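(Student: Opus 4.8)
The plan is to observe that the degree of a vertex can be recovered by summing how many of its neighbors fall into each block of the partition, and that this count is exactly what the quotient matrix records. First I would fix an arbitrary vertex $v \in V_j$. Since $P = \{V_0, V_1, \ldots, V_r\}$ is a partition of the vertex set of $G$, the blocks are pairwise disjoint and their union is all of $V(G)$. In particular, every neighbor of $v$ belongs to exactly one block $V_i$.

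Next I would use this to decompose the neighbor set. The sets $N(v) \cap V_0, N(v) \cap V_1, \ldots, N(v) \cap V_r$ are pairwise disjoint, and since every element of $N(v)$ lies in some block, their union equals $N(v)$. Thus these sets form a partition of $N(v)$, and by additivity of cardinality over a disjoint union,
\begin{equation*}
  \deg(v) = |N(v)| = \left| \bigcup_{i=0}^{r} \big( N(v) \cap V_i \big) \right| = \sum_{i=0}^{r} |N(v) \cap V_i|.
\end{equation*}

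Finally I would invoke the definition of the quotient matrix. Because $v \in V_j$ and $P$ is an equitable partition, the value $|N(v) \cap V_i|$ does not depend on the choice of vertex within $V_j$ and equals $q_{ij}$ by definition. Substituting $q_{ij} = |N(v) \cap V_i|$ into the sum above yields $\deg(v) = \sum_{i=0}^{r} q_{ij}$, as claimed.

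There is essentially no hard step here: the argument is a direct counting identity. The only point that requires care is the justification that $N(v)\cap V_0,\ldots,N(v)\cap V_r$ genuinely partition $N(v)$, which rests solely on $P$ being a partition of the full vertex set rather than on the equitable condition itself; the equitable hypothesis is used only to guarantee that $q_{ij}$ is well defined as the common value of $|N(v)\cap V_i|$ over all $v \in V_j$.
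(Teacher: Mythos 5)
Your proof is correct and follows essentially the same route as the paper's: decompose $N(v)$ over the blocks of the partition, sum the cardinalities, and identify each term with $q_{ij}$ via the definition of the quotient matrix. The only difference is that you spell out the disjointness justification more explicitly, which the paper leaves implicit.
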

\begin{proof}
  Define $V$ as the vertex set of $G$.
  \begin{align*}
    \deg(v)&=|N(v)\cap V|\\
    &=\sum_{i=0}^{r}|N(v)\cap V_i|.
  \end{align*}
  From the definition of an equitable partition and $q_{ij}$, 
  \[
    v\in V_j \Longrightarrow \sum_{i=0}^{r}|N(v)\cap V_i|=\sum_{i=0}^{r}q_{ij}.
  \]
\end{proof}

\begin{lem}\label{eq:edge}
  Let $G$ be a graph, and let $P=\{V_0,V_1,\ldots,V_{r}\}$ be an equitable partition of $G$ with quotient matrix $q_{ij}$.
  \begin{equation*}
    |V_i|q_{ji}=|V_j|q_{ij}.
  \end{equation*}  
\end{lem}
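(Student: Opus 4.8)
The plan is to establish this identity by a double-counting argument applied to the edges joining the two cells $V_i$ and $V_j$. The quantity I would count in two different ways is the set of ordered incident pairs
\[
E_{ij}=\{(v,u): v\in V_i,\ u\in V_j,\ v\sim u\}.
\]

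First I would enumerate $E_{ij}$ by summing over the vertices of $V_i$. For each $v\in V_i$, the number of $u\in V_j$ with $v\sim u$ is precisely $|N(v)\cap V_j|$, and because $P$ is equitable this count equals the constant $q_{ji}$ for every $v\in V_i$, independently of the choice of $v$. Summing over $V_i$ therefore gives $|E_{ij}|=\sum_{v\in V_i}|N(v)\cap V_j|=|V_i|\,q_{ji}$. Symmetrically, enumerating $E_{ij}$ by summing over the vertices of $V_j$ and again invoking the equitable property to replace each term $|N(u)\cap V_i|$ by the constant $q_{ij}$ yields $|E_{ij}|=\sum_{u\in V_j}|N(u)\cap V_i|=|V_j|\,q_{ij}$. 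Equating the two expressions for $|E_{ij}|$ immediately produces $|V_i|q_{ji}=|V_j|q_{ij}$.

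There is no substantial obstacle here; this is simply the bipartite analogue of the handshake lemma, and the heavy lifting is already done by \Cref{def:equitable}, which guarantees that the summands collapse to a single parameter. The only points that require care are bookkeeping ones: making sure the index convention $q_{ji}=|N(v)\cap V_j|$ for $v\in V_i$ is applied in the correct direction, and checking that the diagonal case $i=j$ causes no trouble. In that case both sides of the asserted equality reduce to the identical expression $|V_i|q_{ii}$ (which incidentally equals twice the number of edges lying inside $V_i$, since each such edge is then counted from both endpoints), so the statement holds trivially. Thus the argument is uniform across all pairs of indices.
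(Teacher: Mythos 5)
Your proof is correct and is exactly the argument the paper intends: the paper's own proof is the one-line remark ``by counting the edges between $V_i$ and $V_j$ in two different ways,'' and your write-up simply carries out that double count explicitly, with the index convention $q_{ji}=|N(v)\cap V_j|$ for $v\in V_i$ applied correctly and the diagonal case handled properly. No issues.
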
 
\begin{proof}
  By counting the edges between $V_i$ and $V_j$ in two different ways, we obtain this lemma.
\end{proof}

\subsection*{Random walk on a graph with equitable partition}

Let us consider a simple random walk on graph $G$ with an equitable partition $P=\{V_0,V_1,\ldots,V_{r}\}$.
Let $X_0=x$ be the starting vertex of a simple random walk on $G$, and let $X_t$ be a random variable representing the vertex reached after $t$ steps in the random walk. 
Similarly, let $Y_0=V_{p_0}\ni X_0$ be the starting partition of the simple random walk on $G$, and let $Y_t=V_{p_t}\ni X_t$ be the random variable representing the partition reached after $t$ steps in the random walk. 
Then, $Y_t$ also represents a stochastic process that moves randomly from $V_j$ to $V_i$, where $V_i$ and $V_j$ are elements of $P$.

Since this partition $P$ is equitable, we can compute the transition probabilities using the quotient matrix of $P$.
Let us denote $Pr(Y_t=V_i\mid Y_{t-1}=V_{j})$ as the probability of $Y_t=V_i$ given that $Y_{t-1}=V_{j}$.
Using \Cref{eq:deg},
\begin{equation}\label{lem:part}
  Pr(Y_t=V_i\mid Y_{t-1}=V_{j})=\frac{|N(X_{t-1})\cap V_i|}{\deg(X_{t-1})}=\frac{q_{ij}}{\sum_{l=0}^{r}q_{lj}}.
\end{equation}

We can define the hitting time from $V_j$ to $V_i$, which is similar to the hitting time from vertex to vertex.
For any integers $i$ and $j$ such that $0\leq i,j\leq r$, we define the hitting time from $V_j$ to $V_i$ as the number of steps required 
to reach any vertex $v\in V_i$ for the first time in a simple random walk starting from any vertex $u\in V_j$. 
We also define the average hitting time from $V_j$ to $V_i$ as the expected value of the hitting times from $V_j$ to $V_i$, denoted as $H((G,P);(i,j))$. 
Because \Cref{lem:part} shows that the transition probability $Pr(Y_t=V_i\mid Y_{t-1}=V_{j})$ is independent of the choice of $u\in V_{j}$ and $v\in V_i$, $H((G,P);(i,j))$ is well-defined. 
Similar to $H(G;(v,v))$, we define $H((G,P);(i,i))=0$.

In the following, we denote $T_{G,P}$ as the transition probability matrix from $V_j$ to $V_i$, where $P=\{V_0,\ldots,V_r\}$ and its $ij$-entry as 
\[
  t(G,P)_{ij}=\frac{q_{ij}}{\sum_{k=0}^{r}q_{kj}}.
\] 
When $G$ and $P$ are clear, we omit them and write $T$ and $t_{ij}$.

\section{Definition of $f$-equitable and proof of \Cref{thm:equi}}\label{sec:stab}

In this section, we first provide the definitions of $f$-equitable.
Following that, we give a proof of \Cref{thm:equi}.

First, we define a stabilized equitable partition, which is useful for considering the average hitting time from any vertex to a specific vertex.

\begin{df}[Stabilized equitable partition]
  Let $G$ be a graph and let $P=\{V_0,V_1,\ldots,V_r\}$ be its equitable partition.
  $P$ is called a stabilized equitable partition centered on vertex $o$ if $V_0=\{o\}$.
\end{df}

When $P$ is a stabilized equitable partition centered on vertex $o$, the average hitting time from any vertex to $o$ can be calculated using $H((G,P);(0,i))$.

\begin{lem}\label{thm:main}
  Let $G$ be a graph and $P=\{V_0=\{o\},V_1,\ldots,V_r\}$ denote its stabilized equitable partition centered on vertex $o$.
  For any vertex $v$ in $V_i$,
  \[
    H(G;(o,v))  = H((G,P);(0,i)).
  \]
\end{lem}

\begin{proof}
  From the definition, the average hitting time from $V_i$ to $V_0$ represents the average hitting time from any vertex $u\in V_i$ to $v\in V_0$.
  In this case, since $V_0=\{o\}$, $v\in V_0$ means $v=o$.
  Therefore, $H(G;(o,v))=H((G,P);(0,i))$.
\end{proof}

We can compute $H((G,P);(0,i))$ using the transition probability matrix $t_{ij}$.

\begin{thm}\label{eq:ht}
  Let $T^-$ be the submatrix obtained by removing the $0$th row and the $0$th column from $t_{ij}$.
  Then, 
  \begin{align*}
    \vec{H}&=(H((G,P);(0,1)),H((G,P);(0,2)),\ldots,H((G,P);(0,r)))\\
    &=-\vec{\allone}(T^--I)^{-1}.
  \end{align*}
\end{thm}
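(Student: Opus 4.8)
The plan is to set up a standard first-step analysis for the hitting times $H((G,P);(0,i))$, treating the quotient process $Y_t$ as a Markov chain on the state space $\{V_0, V_1, \ldots, V_r\}$ with transition matrix $T$. The key observation, already established in \Cref{lem:part}, is that $Y_t$ is genuinely a Markov chain with transition probabilities $t_{ij} = q_{ij}/\sum_{k} q_{kj}$, so the average hitting times to the absorbing state $V_0$ satisfy the usual recurrence.

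\medskip

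First I would condition on the first step of the walk from $V_i$ (for $i \neq 0$). Starting in $V_i$, the walk moves to $V_j$ with probability $t_{ji}$, after which it needs $H((G,P);(0,j))$ further steps on average to reach $V_0$ (with the convention $H((G,P);(0,0)) = 0$). Writing $h_i := H((G,P);(0,i))$, this gives the system
\begin{equation*}
  h_i = 1 + \sum_{j=0}^{r} t_{ji}\, h_j = 1 + \sum_{j=1}^{r} t_{ji}\, h_j \qquad (1 \le i \le r),
\end{equation*}
where the $j=0$ term drops out because $h_0 = 0$. Note that the coefficient of $h_j$ is $t_{ji}$, the probability of moving \emph{from} $V_i$ \emph{to} $V_j$, so in matrix form the relevant object is the transpose of the restricted transition matrix.

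\medskip

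Next I would rewrite this system compactly. With $\vec{H} = (h_1, \ldots, h_r)$ as a row vector and $T^-$ the submatrix of $T$ obtained by deleting the $0$th row and column, the sum $\sum_{j=1}^{r} t_{ji} h_j$ is the $i$th entry of $\vec{H}\, (T^-)^{\mathsf{T}}$ written as a row, equivalently the $i$th entry of the row vector $\vec{H}\, T^-$ read with the correct index convention; carrying out the bookkeeping carefully, the system becomes $\vec{H} = \vec{\allone} + \vec{H}\, T^-$, i.e.
\begin{equation*}
  \vec{H}\,(I - T^-) = \vec{\allone}, \qquad \text{hence} \qquad \vec{H} = -\vec{\allone}\,(T^- - I)^{-1}.
\end{equation*}

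\medskip

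The last step requires justifying that $T^- - I$ is invertible, i.e.\ that $1$ is not an eigenvalue of $T^-$. The hard part will be precisely this invertibility claim: since $T^-$ is a substochastic matrix obtained by deleting the row and column of an absorbing state, one argues that from every state $V_i$ with $i \neq 0$ the absorbing state $V_0$ is reachable (which holds because $G$ is connected, so the quotient chain is irreducible apart from the absorbing state), whence $T^-$ has spectral radius strictly less than $1$ by standard substochastic-matrix theory. This guarantees both that the hitting times are finite and that $(T^- - I)^{-1}$ exists, so the expression for $\vec{H}$ is well-defined. I would also take care to state the index convention for $\vec{H}\, T^-$ explicitly, since the transpose bookkeeping between "transition from $V_i$" and the matrix entry $t_{ji}$ is the place where a sign or orientation error is most likely to creep in.
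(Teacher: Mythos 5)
Your proposal is correct and follows essentially the same route as the paper: first-step conditioning on the quotient chain gives $h_i = 1 + \sum_k t_{ki}h_k$, which in row-vector form is $\vec{H}T^- + \vec{\allone} = \vec{H}$, and inverting yields the claim. Your additional justification that $T^- - I$ is invertible (via substochasticity and reachability of the absorbing class) is a point the paper leaves implicit, and is a welcome extra; just note that the correct matrix expression is $\vec{H}T^-$ itself, not $\vec{H}(T^-)^{\mathsf{T}}$, since $(\vec{H}T^-)_i = \sum_j h_j t_{ji}$ already matches the recurrence.
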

\begin{proof}
Similar to the equations formulated for the average hitting time from vertex to vertex, equations can also be formulated for $H((G,P);(0,i))$ as follows:
\begin{align*}
  H((G,P);(0,i))&=\sum_{k=0}^r(1+H((G,P);(0,k)))t_{ki}\\
  &=1+\sum_{k=0}^rH((G,P);(0,k))t_{ki}.  
\end{align*}
When considering this equation for all $i$, we obtain $\vec{H}T^- + \vec{\allone} = \vec{H}$ and, by transforming the equation,
\[
  \vec{H}=-\vec{\allone}(T^--I)^{-1}.
\]
\end{proof}

\Cref{eq:ht} implies that if $P$ is a stabilized equitable partition centered on vertex $o$, then we can compute $H(G;(o,u))$ for any vertex $u$ in $G$ 
using the transition probability matrix, which is calculated by the quotient matrix of $P$.

This method can obviously compute only the average hitting time ending at a specific vertex. 
However, in some graphs with high symmetry, we can compute the average hitting time from any vertex to any vertex using this method. 
We classify such graphs as $f$-equitable graphs, where $f$ is a function that takes two vertices of a graph as arguments.
\begin{df}[$f$-equitable graph]
  Let $G$ be a graph with the vertex set $V$, and let $f:V\times V\mapsto \{x_0,x_1,\ldots,x_r\}$ be a function.
  Define $F_{x_i}(o)=\{v\in V\mid f(o,v)=x_i\}$ as a subset of $V$, and let 
  \[
    P_o=\{F_{x_0}(o),F_{x_1}(o),\ldots,F_{x_r}(o)\}
  \] 
  be the partition of $V$, where $o$ is any vertex in $V$.
  $G$ is called an \textit{$f$-equitable graph} if, for all vertices $o$ in the graph $G$, $P_o$ satisfies the following three conditions:
 \begin{itemize}
  \item $F_{x_0}(o)=\{o\}$;
  \item $P_o$ is equitable;
  \item the quotient matrix of $P_o$ does not depend on the choice of $o$.
 \end{itemize}
\end{df}

Note that the quotient matrix of $P_o=\{F_{x_0}(o),F_{x_1}(o),\ldots,F_{x_r}(o)\}$ is indexed by $x_0,x_1,\ldots,x_r$.
In the following, we denote the quotient matrix of an $f$-equitable graph as the quotient matrix of $P_o$.
Before the proof of \Cref{thm:equi}, we give a simple property of an $f$-equitable graph.
\begin{lem}
  Let $G$ be an $f$-equitable graph with the quotient matrix $Q$.
  Then, $G$ is a regular graph with valency $\sum_{k=0}^{r}q_{x_kx_0}$, 
  where $q_{x_ix_j}$ is the entry of the $x_i$th row and $x_j$th column of $Q$.
\end{lem}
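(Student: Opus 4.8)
The plan is to apply \Cref{eq:deg} to each vertex individually and then leverage the defining property that the quotient matrix is independent of the chosen center. First I would fix an arbitrary vertex $o$ of $G$ and form the partition $P_o=\{F_{x_0}(o),F_{x_1}(o),\ldots,F_{x_r}(o)\}$. Since $G$ is $f$-equitable, $P_o$ is an equitable partition whose quotient matrix is exactly $Q$, and moreover $F_{x_0}(o)=\{o\}$, so the center $o$ belongs to the block indexed by $x_0$.

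Next I would invoke \Cref{eq:deg} with the vertex $v=o$ lying in the block indexed by $x_0$. That lemma expresses the degree of a vertex as the column sum of the quotient matrix over the column labeling its block, so it immediately yields
\[
\deg(o)=\sum_{k=0}^{r}q_{x_kx_0}.
\]
The crucial observation is that the right-hand side does not involve $o$ in any essential way: by the third condition in the definition of an $f$-equitable graph, the quotient matrix $Q$ of $P_o$ is the \emph{same} matrix for every choice of the center $o$. Hence $\sum_{k=0}^{r}q_{x_kx_0}$ is a single fixed constant, and we conclude that $\deg(o)$ equals this common value for every vertex $o$ of $G$. Therefore $G$ is regular, and its valency is $\sum_{k=0}^{r}q_{x_kx_0}$, as claimed.

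There is no genuine obstacle in this argument; it is essentially a direct corollary of \Cref{eq:deg} combined with the center-independence of $Q$. The only point demanding a little care is the index bookkeeping: one must recognize that because $o\in F_{x_0}(o)$, the relevant column of $Q$ is the one indexed by $x_0$, so that the degree of $o$ is the $x_0$th column sum rather than an arbitrary column sum. It is precisely the independence of $Q$ from the center that upgrades the per-vertex degree identity of \Cref{eq:deg} into a uniform valency valid for all vertices simultaneously.
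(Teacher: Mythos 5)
Your proposal is correct and follows essentially the same route as the paper: fix a center $o$, note that $o$ lies in the block $F_{x_0}(o)$, apply the column-sum degree identity (\Cref{eq:deg}) to read off $\deg(o)=\sum_{k=0}^{r}q_{x_kx_0}$, and conclude regularity from the center-independence of $Q$. The only cosmetic difference is that you cite \Cref{eq:deg} explicitly where the paper states the column-sum fact in words.
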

\begin{proof}
  From the definition of $f$-equitable, for all vertices $o$ in $G$, there exists a stabilized equitable partition centered on $o$ and its quotient matrix is $Q$. 
  The sum of the column $x_0$ of $Q$ represents the degree of $o$.
  Since $Q$ does not depend on the choice of $o$, neither does the degree of $o$.
  Thus, the proof is completed.
\end{proof}

Then, we prove \Cref{thm:equi}.

\begin{proof}[Proof of \Cref{thm:equi}]
  Define $F_{x_i}(v)=\{u\in V\mid f(v,u)=x_i\}$ as the subset of $V$ and $P_v=\{F_{x_0}(v),F_{x_1}(v),\ldots,F_{x_r}(v)\}$ as the partition of $G$.
  From the definition of $f$-equitable, $P_v$ is a stabilized equitable partition centered on $v$ and its quotient matrix is $Q$.
  Since the valency of $G$ is $k$, from \Cref{lem:part}
  \[
    T_{G,P_v}=\frac{1}{k}Q.
  \]
  From \Cref{eq:ht},
\begin{align*}
  \vec{H'}&=(H((G,P_v);(x_0,x_1)),H((G,P_v);(x_0,x_2)),\ldots,H((G,P_v);(x_0,x_r)))\\
  &=-\vec{\allone}(T_{G,P_v}^--I)^{-1}\\
  &=-\vec{\allone}\left(\frac{1}{k}Q^--I\right)^{-1}\\
  &=\vec{H}.
\end{align*}
  Therefore, $\vec{H}=\vec{H'}$.
  Especially,
\[
h_{x_i}=  H((G,P_v);(x_0,x_i)),
\]
where $1\leq i\leq r$.
  From \Cref{thm:main}, for any vertex $u$ in $F_{x_i}(v)$,
  \begin{align*}
    H(G;(v,u))=H((G,P_v);(x_0,x_i)).
  \end{align*}
  From the definition of $F_{x_i}(v)$, $u\in F_{x_i}(v)$ if and only if $f(v,u)=x_i$.
  Therefore, for any vertex $u$ not in $F_{x_0}(v)$ in $G$,
  \begin{align*}
    H(G;(v,u))=H((G,P_v);(x_0,f(v,u)))=h_{f(v,u)}.
  \end{align*}
  Given that $F_{x_0}(v)=\{v\}$, $u\in F_{x_0}(v)$ if and only if $u=v$.
  Hence,
  \[
    H(G;(v,u))=\begin{cases}
      0 & f(v,u)=x_0\\
      h_{f(v,u)} &f(v,u)\neq x_0
    \end{cases}.  
  \]
  Since $h_{x_0}=0$, by organizing the conditions,
  \[
    H(G;(u,v))=h_{f(v,u)}.
  \]
\end{proof}

In short, we can compute the average hitting time from any vertex to any vertex in an $f$-equitable graph using the quotient matrix and the function $f$. 
In the following, we denote
\[
  \vec{H}(A)=-\vec{\allone}(A^--I)^{-1},
\]
where $A$ is square matrix indexed by $\{x_0,\ldots,x_r\}$ and $A^-$ is the submatrix obtained by removing the $x_0$th row and the $x_0$th column from $A$.

We will consider specific examples of $f$-equitable graphs for which we can specify the quotient matrix.

\section{Average hitting time on a distance-regular graph}\label{sec:drg}

In this section, we determine the average hitting time of a distance-regular graph using \Cref{thm:equi}.

We define $d_G(u,v)$ as the distance between two vertices $u$ and $v$ in graph $G$, representing the length of the shortest path connecting $u$ and $v$, and define $d_G(v,v)=0$.
We also define $\Delta_G$ as the diameter of $G$, which is the maximum distance $d_G(u,v)$ among all pairs of vertices $u$ and $v$ in $G$.

\begin{df}[Distance-regular graph~\cite{key}]\label{df:drg}
  A $k$-regular graph $G$ is called \textit{distance-regular} if there exist integers $b_i$ and $c_i$, where $0\leq i\leq \Delta_G$, 
  such that for any pair of vertices $u$ and $v$ in the graph with a distance of $i$ between them, 
  there are precisely $c_i$ neighbors of $u$ in the set of vertices at a distance of $i-1$ from $v$ and $b_i$ neighbors of $u$ in the set of vertices at a distance of $i+1$ from $v$. 
  This integer array, denoted as $\{b_0=k,b_1,\ldots,b_{\Delta_G-1};c_1=1,c_2,\ldots,c_{\Delta_G}\}$, is called the \textit{intersection array}. 
\end{df}

In fact, the distance-regular graph is a notable example of a $d_G$-equitable graph. 

\begin{lem}\label{cor:drgEqui}
  $G$ is a distance-regular graph with the intersection array $\{b_0=k,\ldots,b_{\Delta_G-1};c_1,\ldots,c_{\Delta_G}\}$
  if and only if $G$ is a $d_G$-equitable graph with the following quotient matrix $Q$:
  \begin{align*}
    Q=  \begin{pmatrix}
      0 & 1 & 0 & \ldots & 0\\
      k & k-b_1-c_1 &c_2&\ldots &0\\
      0 & b_1 &k-b_2-c_2&\ldots &0\\
      0&     &\vdots& & 0\\
      0&\ldots&\ldots&k-b_{\Delta_G-1}-c_{\Delta_G-1} &c_{\Delta_G}\\
      0&\ldots&\ldots&b_{\Delta_G-1} &k-b_{\Delta_G}-c_{\Delta_G}\\
    \end{pmatrix}.  
  \end{align*}
  \end{lem}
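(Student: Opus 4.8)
The plan is to prove both implications by identifying the function $f$ with the graph distance $d_G$ and recognizing the partition $P_o=\{F_{x_0}(o),\ldots,F_{x_r}(o)\}$ as the distance partition around $o$, under the index identification $x_i=i$, $r=\Delta_G$, and $F_{x_i}(o)=\{u\in V\mid d_G(o,u)=i\}$. Since a distance-regular graph is connected, $d_G$ attains exactly the values $\{0,1,\ldots,\Delta_G\}$, matching the index set of the asserted matrix $Q$. The one observation driving everything is the triangle inequality: if $u\sim w$ then $|d_G(o,u)-d_G(o,w)|\leq 1$, so every neighbor of a vertex in $F_{x_j}(o)$ lies in $F_{x_{j-1}}(o)\cup F_{x_j}(o)\cup F_{x_{j+1}}(o)$. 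This forces any distance-based quotient matrix to be tridiagonal, and reduces the whole statement to matching three bands of entries with the intersection numbers.

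For the forward direction I would assume $G$ is distance-regular and verify the three defining conditions of a $d_G$-equitable graph. The condition $F_{x_0}(o)=\{o\}$ is immediate, as $d_G(o,u)=0$ iff $u=o$. For equitability, fix $o$ and a vertex $u\in F_{x_j}(o)$; by \Cref{df:drg}, $u$ has exactly $c_j$ neighbors in $F_{x_{j-1}}(o)$ and exactly $b_j$ neighbors in $F_{x_{j+1}}(o)$, and the triangle-inequality observation forces the remaining $k-b_j-c_j$ neighbors into $F_{x_j}(o)$. These counts depend only on $j$, which is precisely equitability, and they yield exactly the tridiagonal entries $q_{j-1,j}=c_j$, $q_{j,j}=k-b_j-c_j$, $q_{j+1,j}=b_j$ of $Q$, with the boundary conventions $c_0=0$, $b_0=k$, $b_{\Delta_G}=0$ accounting for the first and last columns. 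Because the $b_i,c_i$ are global constants of $G$, the quotient matrix is the same for every $o$, giving the third condition.

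For the backward direction I would assume $G$ is $d_G$-equitable with quotient matrix equal to the displayed $Q$ and simply read the intersection numbers back off. By definition of the quotient matrix, for any $o$ and any $u\in F_{x_j}(o)$ the number of neighbors of $u$ at distance $i$ from $o$ equals $q_{ij}$; reading the sub- and super-diagonals then states that every vertex at distance $j$ from every vertex $o$ has exactly $c_j$ neighbors at distance $j-1$ and exactly $b_j$ neighbors at distance $j+1$. This is verbatim the defining property of a distance-regular graph with intersection array $\{b_0=k,\ldots,b_{\Delta_G-1};c_1=1,\ldots,c_{\Delta_G}\}$, where $c_1=1$ is forced by the $(x_0,x_1)$-entry of $Q$. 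Regularity with valency $k$ comes from the preceding lemma, since $k$ is the sum of the $x_0$-column of $Q$.

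I expect the main obstacle to be bookkeeping rather than anything conceptual: pinning down the identification $x_i=i$, confirming that $d_G$ ranges over the full set $\{0,\ldots,\Delta_G\}$ and that each sphere $F_{x_i}(o)$ is nonempty (so the partition has the right number of cells and $Q$ the right size, which rests on every vertex of a distance-regular graph having eccentricity equal to the diameter), and handling the two boundary columns $j=0$ and $j=\Delta_G$, where the $c_0$ and $b_{\Delta_G}$ bands are absent. Once the triangle-inequality observation is installed, both implications amount to reading the same three numbers $c_j$, $k-b_j-c_j$, $b_j$ in opposite directions, so no genuine computation is needed.
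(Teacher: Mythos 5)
Your proposal is correct and follows essentially the same route as the paper: both directions are a direct translation between the defining counts $c_j$, $k-b_j-c_j$, $b_j$ of a distance-regular graph and the entries of the quotient matrix of the distance partition $\{D_0(o),D_1(o),\ldots,D_{\Delta_G}(o)\}$. The only difference is that you spell out the triangle-inequality justification for the tridiagonal shape and the boundary and nonemptiness bookkeeping, which the paper leaves implicit.
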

\begin{proof}
Let $G$ be a distance-regular graph with the intersection array $\{b_0=k,\ldots,b_{\Delta_G-1};c_1=1,\ldots,c_{\Delta_G}\}$.
Define $D_i(v)=\{u\in V\mid d_G(v,u)=i\}$ and partition $P_v=\{D_0(v),D_1(v),\ldots,D_{\Delta_G}(v)\}$.
From the definition of distance-regular graphs, for any vertex $v\in G$,
\begin{equation}\label{eq:inter}
  u\in D_j(v)\Longrightarrow |N(u)\cap D_i(v)|=
    \begin{cases}
      c_i &  i = j-1\\
      k-b_i-c_i &  i = j\\
      b_i &  i = j+1\\
      0 & \text{others}\\
    \end{cases}  
\end{equation}
This implies $P_v$ is the equitable partition and its quotient matrix is $Q$ as follows:
\[
Q=
\begin{pmatrix}
  0 & 1 & 0 & \ldots & 0\\
  k & k-b_1-c_1 &c_2&\ldots &0\\
  0 & b_1 &k-b_2-c_2&\ldots &0\\
  0&     &\vdots& & 0\\
  0&\ldots&\ldots&k-b_{\Delta_G-1}-c_{\Delta_G-1} &c_{\Delta_G}\\
  0&\ldots&\ldots&b_{\Delta_G-1} &k-b_{\Delta_G}-c_{\Delta_G}\\
\end{pmatrix}  
\]
Here, we can define any vertex in $G$ as $v$, and $Q$ is independent of the choice of $v$.
Furthermore, from the definition of $d_G$, $D_0(v)=\{v\}$.
Therefore, the distance-regular graph is a $d_G$-equitable graph with the quotient matrix $Q$.

Conversely, let $G$ be a $d_G$-equitable graph with the quotient matrix $Q$.
From the definition of the quotient matrix, we also have \Cref{eq:inter}, 
and this implies $G$ is a distance-regular graph with the intersection array $\{b_0=k,\ldots,b_{\Delta_G-1};c_1,\ldots,c_{\Delta_G}\}$.
\end{proof}

Using \Cref{thm:equi}, we can compute the average hitting times in distance-regular graphs.
\begin{thm}\label{cor:drg}
Let $G$ be a distance-regular graph with intersection array $\{b_0=k,b_1,\ldots,b_{\Delta_G-1};c_1=1,\ldots,c_{\Delta_G}\}$. 
Consider vertices $u$ and $v$ in this distance-regular graph $G$, and let $Q$ be the quotient matrix of $G$ as a $d_G$-equitable graph.
Then,
\[
  H(G;(u,v))=h_{d_G(u,v)},
\]
 where $\vec{H}(\frac{1}{k}Q)=(h_1,h_2,\ldots,h_{\Delta_G})$ and $h_0=0$.
\end{thm}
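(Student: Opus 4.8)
The plan is to obtain \Cref{cor:drg} as an immediate specialization of \Cref{thm:equi}, with \Cref{cor:drgEqui} supplying the required hypotheses. First I would invoke \Cref{cor:drgEqui}, which tells us that a distance-regular graph $G$ with intersection array $\{b_0=k,b_1,\ldots,b_{\Delta_G-1};c_1=1,\ldots,c_{\Delta_G}\}$ is precisely a $d_G$-equitable graph whose quotient matrix is the tridiagonal matrix $Q$ displayed there. In this correspondence the relevant function $f$ is the distance function $d_G$ itself, and its range is the set of attainable distances $\{0,1,\ldots,\Delta_G\}$; thus $x_i=i$ for each $i$, $r=\Delta_G$, and the ordering $\{x_0,x_1,\ldots,x_r\}$ used to index $Q$ is simply the ordering by distance. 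Moreover, the valency of $G$ as a $d_G$-equitable graph is $k=b_0$.

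Next I would apply \Cref{thm:equi} verbatim under these identifications. That theorem asserts that for an $f$-equitable graph with quotient matrix $Q$ and valency $k$ one has $H(G;(v,u))=h_{f(v,u)}$, where the entries $h_{x_1},\ldots,h_{x_r}$ are assembled into $\vec{H}=-\vec{\allone}(\frac{1}{k}Q^--I)^{-1}$ and $h_{x_0}=0$. Comparing with the notation $\vec{H}(A)=-\vec{\allone}(A^--I)^{-1}$ introduced just after the proof of \Cref{thm:equi}, the vector produced by the theorem is exactly $\vec{H}(\frac{1}{k}Q)=(h_1,h_2,\ldots,h_{\Delta_G})$ together with $h_0=0$, which is the indexing used in the statement of \Cref{cor:drg}.

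Finally I would record two small bookkeeping points that make the conclusion read cleanly. Since $f=d_G$ is symmetric, $d_G(u,v)=d_G(v,u)$, so it is immaterial whether we write $H(G;(u,v))$ or $H(G;(v,u))$; and the case $u=v$ is handled automatically, because $d_G(v,v)=0$ and we have set $h_0=0$, consistent with $H(G;(v,v))=0$. Assembling these, \Cref{thm:equi} yields $H(G;(u,v))=h_{d_G(u,v)}$ for all vertices $u,v$, which is the claim. I do not expect any genuine obstacle here: the substance of the result is already contained in \Cref{thm:equi} and \Cref{cor:drgEqui}, and the only care required is aligning the indexing of $Q$ with the distance ordering and checking that the $\vec{H}(\cdot)$ notation reproduces the theorem's output.
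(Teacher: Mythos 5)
Your proposal is correct and follows exactly the paper's route: the paper's own proof is a one-line invocation of \Cref{thm:equi}, implicitly relying on \Cref{cor:drgEqui} to identify $G$ as a $d_G$-equitable graph with quotient matrix $Q$ and valency $k=b_0$. Your additional bookkeeping (the indexing by distance, the symmetry of $d_G$, and the $u=v$ case) merely makes explicit what the paper leaves implicit.
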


\begin{proof}
  From \Cref{thm:equi},
  \[
    H(G;(u,v))=h_{d_G(u,v)}. 
  \]
\end{proof}

\Cref{cor:drg} was previously obtained independently by Biggs~\cite{biggs1993}, Devroye and Sbihi~\cite{Devroye1990}, and Van Slijpe~\cite{Slijpe}, 
and they computed $\vec{H}(\frac{1}{k}Q)$.

\begin{thm}[\cite{biggs1993,Devroye1990,Slijpe}]
  \[
  h_i=  \sum_{j=1}^{i}\frac{k}{k_{j-1}b_{j-1}}\sum_{l=j}^{\Delta_G}k_l,
  \]
  where $k_i$ is an integer array defined for $0\leq i\leq \Delta_G$ such that $k_0=1$ and $k_i=\frac{b_{i-1}}{c_i}k_{i-1}$. 
\end{thm}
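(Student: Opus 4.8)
The plan is to exploit the uniqueness built into \Cref{eq:ht}: the vector $\vec{H}(\frac{1}{k}Q)=-\vec{\allone}((\frac{1}{k}Q)^- - I)^{-1}$ is, by definition, the unique solution of the linear system $\vec{H}((\frac{1}{k}Q)^- - I)=-\vec{\allone}$, so it suffices to show that the numbers $h_i$ given by the claimed closed form solve this system. I would therefore first unfold the matrix equation into scalar form. Because $Q$ is tridiagonal with $q_{i-1,i}=c_i$, $q_{i,i}=k-b_i-c_i$ and $q_{i+1,i}=b_i$, dividing by $k$ and writing out the $i$-th hitting-time equation $h_i=1+\sum_l h_l t_{li}$ gives, after cancelling the diagonal contribution,
\[
k+c_i(h_{i-1}-h_i)+b_i(h_{i+1}-h_i)=0,\qquad 1\le i\le \Delta_G,
\]
with the conventions $h_0=0$ (since $D_0(v)=\{v\}$) and $b_{\Delta_G}=0$ (so the $h_{\Delta_G+1}$ term is absent). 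Keeping careful track of these two boundary conventions is the first place where the index bookkeeping must be done correctly.

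Next I would pass to the difference variables $g_i:=h_i-h_{i-1}$, which collapses the three-term relation to the first-order recurrence $b_i g_{i+1}=c_i g_i-k$ for $1\le i<\Delta_G$, together with the terminal equation $c_{\Delta_G}g_{\Delta_G}=k$ coming from $b_{\Delta_G}=0$. The natural objects to feed into this recurrence are the sphere sizes $k_i=|D_i(v)|$. Here I would invoke the edge count of \Cref{eq:edge} applied to the classes $D_{i-1}(v)$ and $D_i(v)$, which yields $k_{i-1}b_{i-1}=k_i c_i$; this is exactly the relation $k_i=\frac{b_{i-1}}{c_i}k_{i-1}$ defining the array, and it supplies the key simplification $\frac{c_i}{k_{i-1}b_{i-1}}=\frac{1}{k_i}$.

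With that identity in hand the proof reduces to substitution and telescoping. Setting $g_i:=\frac{k}{k_{i-1}b_{i-1}}\sum_{l=i}^{\Delta_G}k_l$, I would verify directly that
\[
c_i g_i-k=\frac{k}{k_i}\Big(\sum_{l=i}^{\Delta_G}k_l-k_i\Big)=\frac{k}{k_i}\sum_{l=i+1}^{\Delta_G}k_l=b_i g_{i+1},
\]
so the recurrence holds, and that the terminal equation reduces to $c_{\Delta_G}g_{\Delta_G}=k$ via $k_{\Delta_G}=\frac{b_{\Delta_G-1}}{c_{\Delta_G}}k_{\Delta_G-1}$. Summing $h_i=\sum_{j=1}^{i}g_j$ (legitimate because $h_0=0$) then produces the claimed closed form $h_i=\sum_{j=1}^{i}\frac{k}{k_{j-1}b_{j-1}}\sum_{l=j}^{\Delta_G}k_l$. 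By the uniqueness of the solution to the linear system, this verified candidate must coincide with $\vec{H}(\frac{1}{k}Q)$, completing the proof.

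I expect the main obstacle to be organizational rather than conceptual: correctly handling the two boundary conventions ($h_0=0$ and $b_{\Delta_G}=0$) when translating the matrix identity into the three-term recurrence, and applying the summation-factor simplification $c_i/(k_{i-1}b_{i-1})=1/k_i$ with the right indices throughout. Once the recurrence and the sphere-size relation are correctly set up, the verification itself is a short direct computation.
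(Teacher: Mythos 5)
Your proposal is correct. Note that the paper itself gives no proof of this statement: it is quoted from Biggs, Devroye--Sbihi and Van Slijpe as a known evaluation of $\vec{H}(\frac{1}{k}Q)$, so there is no internal argument to compare against. Your verification is sound and self-contained within the paper's framework: the scalar form $k+c_i(h_{i-1}-h_i)+b_i(h_{i+1}-h_i)=0$ is exactly the $i$-th component of $\vec{H}(T^--I)=-\vec{\allone}$ for the tridiagonal quotient matrix of \Cref{cor:drgEqui} (using $h_0=0$ to reinstate the deleted column and $b_{\Delta_G}=0$ for the boundary), the identity $k_{i-1}b_{i-1}=k_ic_i$ follows either from the recursive definition of $k_i$ or, as you note, from \Cref{eq:edge} applied to the distance partition, and the telescoping check $c_ig_i-k=b_ig_{i+1}$, $c_{\Delta_G}g_{\Delta_G}=k$ is a correct direct computation. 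The one point you lean on implicitly is invertibility of $T^--I$, which guarantees uniqueness of the solution you verify; the paper assumes this throughout (it is standard for the absorbing chain on a connected graph), so your appeal to uniqueness is consistent with its conventions. Your route — guess the difference sequence $g_i$ and verify the first-order recurrence — is essentially the classical argument in the cited sources, so it fills the gap the paper leaves by citation rather than departing from it.
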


\section{Average hitting times of Cartesian product of two strongly regular graphs}\label{sec:srg}

In this section, we give a proof that if $G$ and $G'$ are $f$-equitable and $f'$-equitable, respectively, then the Cartesian product of $G$ and $G'$ is $(f,f')$-equitable, 
where $(f,f')$ is defined by $f$ and $f'$. 
Using this result, we calculate the average hitting time from any vertex to another on the Cartesian product of two strongly regular graphs.
Before the discussion, we define the Cartesian product of two graphs.

\begin{df}[Cartesian product of two graphs]
  Let $G$ and $G'$ be graphs with vertices $V$ and $V'$, respectively. 
  The Cartesian product of $G$ and $G'$ is defined by the vertex set $V\times V'$, 
  and the vertices $(u,u')$ and $(v,v')$ in $V\times V'$ are adjacent if 
  \[
    ((u = v) \land (u'\sim v'))\lor ((u \sim v) \land (u' = v')).
  \]
  \end{df}

  In the following, we define $G\times G'$ as the Cartesian product of two graphs $G$ and $G'$.
\subsection*{Cartesian product of two graphs that are $f$-equitable and $f'$-equitable}

In this subsection, we prove the following theorem.

\begin{thm}\label{thm:carteq}
  Let $G$ be an $f$-equitable graph with the quotient matrix $Q$ and vertex set $V$.
  Similarly, let $G'$ be an $f'$-equitable graph with the quotient matrix $Q'$ and vertex set $V'$.
  Then, $G\times G'$ is an $(f,f')$-equitable graph with the quotient matrix \[Q\otimes I_{r'+1}+I_{r+1}\otimes Q',\] 
  where $r+1,r'+1$ is the size of $Q,Q'$, respectively, and $(f,f')$ is a function such that 
  \begin{align*}
    (f,f'):(V\times V')\times(V\times V')  &\mapsto f(V\times V)\times f'(V'\times V')\\
    ((u,u'),(v,v'))&\mapsto(f(u,v),f'(u',v')).
  \end{align*}
\end{thm}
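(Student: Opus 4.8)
The plan is to verify directly the three defining conditions of an $(f,f')$-equitable graph for the partition $P_{(o,o')}$ attached to an arbitrary base vertex $(o,o')\in V\times V'$. Writing the range of $f'$ as $\{y_0,\ldots,y_{r'}\}$ and setting $g=(f,f')$, the crucial first observation is that the cells of this partition factor as products of cells of the two factor graphs: a vertex $(v,v')$ satisfies $g((o,o'),(v,v'))=(x_i,y_j)$ exactly when $f(o,v)=x_i$ and $f'(o',v')=y_j$, so that
\[
  F_{(x_i,y_j)}((o,o'))=F_{x_i}(o)\times F_{y_j}(o').
\]
The singleton condition is then immediate: since $G$ is $f$-equitable and $G'$ is $f'$-equitable we have $F_{x_0}(o)=\{o\}$ and $F_{y_0}(o')=\{o'\}$, whence $F_{(x_0,y_0)}((o,o'))=\{(o,o')\}$, and the pair $(x_0,y_0)$ plays the role of the distinguished index.

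Next I would establish equitability and simultaneously read off the quotient matrix. The key structural fact is that the neighbourhood of a vertex in a Cartesian product splits as a disjoint union along the two coordinates,
\[
  N((v,v'))=\bigl(N_G(v)\times\{v'\}\bigr)\cup\bigl(\{v\}\times N_{G'}(v')\bigr),
\]
these two sets being disjoint because every edge of $G\times G'$ alters exactly one coordinate. Fixing a cell $F_{(x_a,y_b)}((o,o'))$ and any representative $(v,v')$ in it (so $v\in F_{x_a}(o)$ and $v'\in F_{y_b}(o')$), a neighbour of the first type $(w,v')$ lies in $F_{x_i}(o)\times F_{y_j}(o')$ precisely when $w\in F_{x_i}(o)$ and $j=b$, and by $f$-equitability of $G$ the number of such $w$ is $q_{x_ix_a}$; symmetrically the neighbours of the second type contribute $q'_{y_jy_b}$ exactly when $i=a$. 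Writing $[\,\cdot\,]$ for the indicator that equals $1$ when its statement holds and $0$ otherwise, this gives
\[
  |N((v,v'))\cap F_{(x_i,y_j)}((o,o'))|=[j=b]\,q_{x_ix_a}+[i=a]\,q'_{y_jy_b},
\]
which depends only on the source cell $(x_a,y_b)$ and the target cell $(x_i,y_j)$, not on the chosen representative; hence $P_{(o,o')}$ is equitable.

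Finally I would match this count to the asserted Kronecker form. Under the lexicographic indexing of the pairs $(x_i,y_j)$, the entry of $Q\otimes I_{r'+1}$ in position $((x_i,y_j),(x_a,y_b))$ is $q_{x_ix_a}[j=b]$ and that of $I_{r+1}\otimes Q'$ is $[i=a]\,q'_{y_jy_b}$, so their sum reproduces exactly the expression above and the quotient matrix of $P_{(o,o')}$ is $Q\otimes I_{r'+1}+I_{r+1}\otimes Q'$. Independence from the base vertex comes for free, since $q_{x_ix_a}$ does not depend on $o$ and $q'_{y_jy_b}$ does not depend on $o'$, so the same matrix is obtained for every $(o,o')$. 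I do not expect a serious obstacle here: the only points requiring care are the bookkeeping of the index ordering so that the factorisation of cells aligns with the standard Kronecker-product convention, and the disjointness of the two families of neighbours, which is exactly what prevents double counting in the displayed formula.
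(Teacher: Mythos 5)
Your proof is correct and follows essentially the same route as the paper: the same decomposition $N((v,v'))=(N_G(v)\times\{v'\})\cup(\{v\}\times N_{G'}(v'))$, the same count $[j=b]\,q_{x_ix_a}+[i=a]\,q'_{y_jy_b}$, and the same identification with the Kronecker sum, followed by the observation that base-point independence is inherited from $Q$ and $Q'$. The only difference is organizational: the paper first proves the statement for arbitrary equitable partitions as a separate lemma (\Cref{lem:cart}) and then specializes to the stabilized partitions $P_v\times P'_{v'}$, whereas you verify the three conditions directly for $P_{(o,o')}$; your explicit note on the disjointness of the two neighbour families is a small point the paper leaves implicit.
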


To prove \Cref{thm:carteq}, we establish the following lemma.

\begin{lem}\label{lem:cart}
  Let $G,G'$ be graphs with equitable partitions $P=\{V_0,V_1,\ldots,V_r\}$ and $P'=\{V'_0,\ldots,V'_{r'}\}$, respectively.
  The quotient matrices of these partitions are denoted as $Q$ and $Q'$, respectively. 
  Then, 
  \[P\times P'=\{(V_i,V'_j)\mid 0\leq i\leq r,0\leq j\leq r'\}\]
  is an equitable partition of $G\times G'$.
  The quotient matrix of $P\times P'$ is given by $Q\otimes I_{r'+1}+I_{r+1}\otimes Q'$.
\end{lem}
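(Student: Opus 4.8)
The plan is to verify directly that $P \times P'$ satisfies the defining condition of an equitable partition (Definition 2.1), and simultaneously read off the neighbor counts to identify the quotient matrix. The central structural fact I would exploit is the way neighborhoods decompose in a Cartesian product: if $(u,u')$ is a vertex of $G \times G'$, then by definition its neighbor set is
\[
  N_{G\times G'}((u,u')) = \big(\{u\}\times N_{G'}(u')\big)\ \cup\ \big(N_G(u)\times\{u'\}\big),
\]
and this is a disjoint union. This is the key observation that lets the two factor-graphs contribute independently to the counting.

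First I would fix a cell $(V_j, V'_{j'})$ of $P\times P'$ and an arbitrary vertex $(u,u')$ in it, so $u\in V_j$ and $u'\in V'_{j'}$. I would then count $|N_{G\times G'}((u,u'))\cap (V_i, V'_{i'})|$ for an arbitrary target cell $(V_i,V'_{i'})$. Using the disjoint decomposition above, a neighbor of $(u,u')$ of the first type, $(u,w')$ with $w'\in N_{G'}(u')$, lands in $(V_i,V'_{i'})$ exactly when $u\in V_i$ (equivalently $i=j$) and $w'\in V'_{i'}$; a neighbor of the second type, $(w,u')$ with $w\in N_G(u)$, lands in $(V_i,V'_{i'})$ exactly when $i'=j'$ and $w\in V_i$. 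Hence the count splits as
\[
  |N((u,u'))\cap (V_i,V'_{i'})| = [i=j]\,|N_{G'}(u')\cap V'_{i'}| + [i'=j']\,|N_G(u)\cap V_i|,
\]
where $[\cdot]$ is the Iverson bracket. Since $P$ and $P'$ are equitable, $|N_G(u)\cap V_i| = q_{ij}$ and $|N_{G'}(u')\cap V'_{i'}| = q'_{i'j'}$ depend only on the cells, not on the representatives $u,u'$. Therefore the total count depends only on the source and target cells, which is exactly the equitable condition; this establishes that $P\times P'$ is equitable.

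To finish, I would identify the resulting quotient matrix. The entry just computed, indexed by source cell $(j,j')$ and target cell $(i,i')$, equals $[i=j]\,q'_{i'j'} + [i'=j']\,q_{ij}$. Matching the Kronecker-product indexing convention (where the pair of indices $(i,i')$ addresses the row of $A\otimes B$ in the natural product order), the term $[i'=j']\,q_{ij}$ is precisely the $((i,i'),(j,j'))$-entry of $Q\otimes I_{r'+1}$, and the term $[i=j]\,q'_{i'j'}$ is the corresponding entry of $I_{r+1}\otimes Q'$. Summing gives the claimed quotient matrix $Q\otimes I_{r'+1} + I_{r+1}\otimes Q'$. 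I expect the only delicate point to be the bookkeeping of the Kronecker indexing—making sure the ordering of cell-pairs matches the ordering implicit in $\otimes$ so that the two Iverson-bracket terms attach to the correct factors; the neighbor-decomposition counting itself is routine once the disjointness of the two neighbor types is noted.
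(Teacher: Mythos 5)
Your proposal is correct and follows essentially the same route as the paper's proof: decompose $N_{G\times G'}((u,u'))$ into the two (disjoint) factor contributions, count intersections with a target cell using the equitability of $P$ and $P'$, and read off the entry $\delta_{ij}q'_{i'j'}+\delta_{i'j'}q_{ij}$ as the $((i,i'),(j,j'))$-entry of $Q\otimes I_{r'+1}+I_{r+1}\otimes Q'$. Your explicit remark that the union is disjoint is a small point the paper leaves implicit, but otherwise the arguments coincide.
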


\begin{proof}
  Using the definition of the Cartesian product, we have the following equation:
  \[
  N_{G\times G'}((v,v'))=(N_G(v),v')\cup (v,N_{G'}(v')).
  \]
  Therefore, 
  \begin{align*}
    |N_{G\times G'}((v,v'))\cap (V_i,V'_j)|&=|((N_G(v),v')\cup (v,N_{G'}(v')))\cap (V_i,V'_j)|\\
    &=|((N_G(v),v')\cap (V_i,V'_j))\cup ((v,N_{G'}(v'))\cap (V_i,V'_j))|\\
    &=|(N_G(v),v')\cap (V_i,V'_j)|+|(v,N_{G'}(v'))\cap (V_i,V'_j)|. 
  \end{align*}
  Since both $P$ and $P'$ are equitable partitions, for any integers $k$ and $l$ such that $0\leq k\leq r$ and $0\leq l\leq r'$, if $(v,v')\in (V_{k},V'_{l})$, then
  \begin{align*}
    &|(N_G(v),v')\cap (V_{i},V'_{j})|=
    \begin{cases} 
    0 & j\neq l\\
    q_{ik}&j=l
    \end{cases},\\
    &|(v,N_{G'}(v'))\cap (V_i,V'_j)|=
    \begin{cases} 
    0 & i\neq k \\
    q'_{jl} & i=k
    \end{cases}.  
  \end{align*}
  Therefore,
  \begin{align*}
    |N_{G\times G'}((v,v'))\cap (V_i,V'_j)|&=|(N_G(v),v')\cap (V_i,V'_j)|+|(v,N_{G'}(v'))\cap (V_i,V'_j)|\\
     &=\delta_{ik}q'_{jl}+\delta_{jl}q_{ik},
  \end{align*}
  where $\delta_{ij}$ is the Kronecker delta defined as
  \[
    \delta_{ij}=\begin{cases}
      1 &i=j\\
      0 &i\neq j
    \end{cases}.
  \]
  This equation implies that the partition $P\times P'$ is an equitable partition. 
  The quotient matrix of $P\times P'$ is indexed by $\{0,1,\ldots,r\}\times\{0,1,\ldots,r'\}$, and its $(i,j)(k,l)$-entry 
  is denoted as $\delta_{ik}q'_{jl}+\delta_{jl}q_{ik}$. 
  We can also represent this matrix as $Q\otimes I_{r'+1}+I_{r+1}\otimes Q'$.
\end{proof}

Using \Cref{lem:cart}, we prove \Cref{thm:carteq}.

\begin{proof}[Proof of \Cref{thm:carteq}]
  Let $V,V'$ be the sets of vertices of $G$ and $G'$, respectively, and
  let $\{x_0,x_1,\ldots,x_r\}$ and $\{x'_0,x'_1,\ldots,x'_{r'}\}$ be the sets of return values of the functions $f$ and $f'$, respectively.
  Define 
  \[
    F_{x_i}(v)=\{u\in V\mid f(v,u)=x_i\}
  \] 
  as the subset of $V$ and 
  \[
    P_v=\{F_{x_0}(v)=\{v\},F_{x_1}(v),\ldots,F_{x_r}(v)\}
  \] 
  as the partition of $V$, where $v$ is for any vertex in $V$.
  Similarly, define 
  \[
    F'_{x'_i}(v')=\{u'\in V'\mid f'(v',u')=x'_i\}
  \] 
  as the subset of $V'$ and 
  \[
    P'_{v'}=\{F'_{x'_0}(v')=\{v'\},F'_{x'_1}(v'),\ldots,F'_{x'_{r'}}(v')\}
  \] 
  as the partition of $V'$, where $v'$ is for any vertex in $V'$.
  From the definition of $f$-equitable graphs and $f'$-equitable graphs, for all vertices $v\in G$ and $v'\in G'$, 
  both $P_v$ and $P'_{v'}$ are stabilized equitable partitions centered on $v$ and $v'$, respectively,
  and their quotient matrices are $Q,Q'$, respectively.
  From \Cref{lem:cart}, $P_v\times P'_{v'}$ is also a stabilized equitable partition centered on $(v,v')$ and its quotient matrix is $Q\otimes I_{r'+1}+I_{r+1}\otimes Q'$, 
  which does not depend on the choice of $v$ and $v'$.
  Here, let $(F,F')_{(i,j)}(v,v')$ be a subset of $V\times V'$ such that
  \[
    (F,F')_{(i,j)}(v,v')=\{(u,u')\in V\times V'\mid (f,f')((v,v'),(u,u'))=(i,j)\},
  \] 
  and let 
  \[
    P_{v,v'}(f,f')=\{(F,F')_{(i,j)}(v,v')\mid 0\leq i\leq r,0\leq j\leq r'\}
  \]
  be a partition of $G\times G'$.
  Since $(u,u')$ is in $(F_{x_i}(v),F'_{x'_j}(v'))$ if and only if $f(v,u)=x_i$ and $f'(v',u')=x'_j$ hold,
  we obtain $P_{v,v'}(f,f')=P_v\times P'_{v'}$.
  Therefore, $G\times G'$ is $(f,f')$-equitable with quotient matrix $Q\otimes I_{r'+1}+I_{r+1}\otimes Q'$.
\end{proof}

From \Cref{thm:carteq}, if $G$ and $G'$ are $f$-equitable and $f'$-equitable, respectively, 
and the quotient matrices of these graphs are determined, then we can also determine the average hitting time in $G\times G'$.

\subsection*{Application of \Cref{thm:carteq} for Cartesian product of two strongly regular graphs}

In this subsection, we calculate the average hitting time from any vertex to another 
on the Cartesian product of two strongly regular graphs using only their parameters by using \Cref{thm:equi} and \Cref{thm:carteq}.

First, we define a strongly regular graph.
\begin{df}
  Graph $G$ is called a \textit{strongly regular graph} if $G$ is a distance-regular graph and the diameter of $G$ is $2$.
\end{df}

Because the diameter of a strongly regular graph is always $2$, its intersection array is denoted as $\{b_0,b_1;c_1=1,c_2\}$.
Using this, the parameter of a strongly regular graph with order $n$ is generally defined as $(n,k,a,c)$, 
where $k=b_0$ is the valency of the graph, $a$ is $k-b_1-1$, and $c$ is $c_2$.

\begin{cor}\label{cor:srgcar}
  Let $G_1,G_2$ be strongly regular graphs with parameters $(n_1,k_1,a_1,c_1)$ and $(n_2,k_2,a_2,c_2)$, respectively, 
  and let the vertex sets of $G_1$ and $G_2$ be $V_1$ and $V_2$, respectively.
  Define $(d_{G_1},d_{G_2})$ as the following function:
  \begin{align*}
    (d_{G_1},d_{G_2}):(V_1\times V_2)\times(V_1\times V_2)&\mapsto \{0,1,2\}\times \{0,1,2\}\\
    (d_{G_1},d_{G_2})((u_1,u_2),(v_1,v_2))&\mapsto (d_{G_1}(u_1,v_1),d_{G_2}(u_2,v_2)).
  \end{align*}
  Then, $G_1\times G_2$ is $(d_{G_1},d_{G_2})$-equitable with the quotient matrix $Q$ as follows:
  \begin{align*}
    Q&=\scalebox{0.7}{$\left(
      \begin{array}{ccccccccc}
       0 & 1 & 0 & 1 & 0 & 0 & 0 & 0 & 0 \\
       k_2 & a_2 & c_2 & 0 & 1 & 0 & 0 & 0 & 0 \\
       0 & -a_2+k_2-1 & k_2-c_2 & 0 & 0 & 1 & 0 & 0 & 0 \\
       k_1 & 0 & 0 & a_1 & 1 & 0 & c_1 & 0 & 0 \\
       0 & k_1 & 0 & k_2 & a_1+a_2 & c_2 & 0 & c_1 & 0 \\
       0 & 0 & k_1 & 0 & -a_2+k_2-1 & a_1-c_2+k_2 & 0 & 0 & c_1 \\
       0 & 0 & 0 & -a_1+k_1-1 & 0 & 0 & k_1-c_1 & 1 & 0 \\
       0 & 0 & 0 & 0 & -a_1+k_1-1 & 0 & k_2 & a_2-c_1+k_1 & c_2 \\
       0 & 0 & 0 & 0 & 0 & -a_1+k_1-1 & 0 & -a_2+k_2-1 & -c_1-c_2+k_1+k_2 \\
      \end{array}
      \right)$}.
  \end{align*}
\end{cor}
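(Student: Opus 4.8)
The plan is to recognize that a strongly regular graph is precisely a distance-regular graph of diameter $2$, so that \Cref{cor:drgEqui} already hands us each factor's quotient matrix, and then simply to invoke \Cref{thm:carteq} with $f=d_{G_1}$ and $f'=d_{G_2}$. No genuinely new combinatorics is required: the corollary is an instantiation of the general Cartesian-product theorem, and the only real work is to carry out the Kronecker-sum bookkeeping and confirm that it reproduces the displayed $9\times 9$ matrix.

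First I would record the quotient matrix of a single strongly regular graph. Since an SRG with parameters $(n,k,a,c)$ has intersection array $\{b_0=k,b_1;c_1=1,c_2=c\}$ with $b_1=k-a-1$ (because $a=k-b_1-c_1$) and $b_2=0$ (diameter $2$ forces no vertices at distance $3$), \Cref{cor:drgEqui} gives that it is $d_G$-equitable with the $3\times 3$ quotient matrix
\[
\begin{pmatrix}
0 & 1 & 0 \\
k & a & c \\
0 & k-a-1 & k-c
\end{pmatrix},
\]
indexed by distances $\{0,1,2\}$. Applying this to $G_1$ and $G_2$ produces matrices $Q_1$ and $Q_2$ in the parameters $(n_1,k_1,a_1,c_1)$ and $(n_2,k_2,a_2,c_2)$; note that $k_i-a_i-1=-a_i+k_i-1$, matching the notation of the claimed matrix.

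Next, since $(d_{G_1},d_{G_2})$ is exactly the function $(f,f')$ of \Cref{thm:carteq} with $f=d_{G_1}$ and $f'=d_{G_2}$, that theorem immediately yields that $G_1\times G_2$ is $(d_{G_1},d_{G_2})$-equitable with quotient matrix $Q_1\otimes I_3+I_3\otimes Q_2$. It remains only to expand this Kronecker sum. Fixing the lexicographic ordering $(0,0),(0,1),(0,2),(1,0),\ldots,(2,2)$, so that the $G_1$-distance indexes the outer block and the $G_2$-distance the inner block, the $(i,j),(k,l)$-entry equals $(Q_1)_{ik}\,\delta_{jl}+\delta_{ik}\,(Q_2)_{jl}$, in agreement with \Cref{lem:cart}. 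I would then evaluate these nine block rows and spot-check representative entries—for instance that the $(1,1),(1,1)$ diagonal entry is $a_1+a_2$, that the $(0,0),(0,1)$ and $(0,0),(1,0)$ entries are both $1$, and that the $(2,2),(2,2)$ entry is $k_1+k_2-c_1-c_2$—to confirm the result coincides with the stated $Q$.

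The main obstacle is purely clerical: keeping the nine-fold index ordering consistent between the Kronecker-product convention and the handwritten matrix, so that every off-diagonal contribution ($c_1$, $c_2$, $k_1$, $k_2$ and the $-a_i+k_i-1$ terms) lands in the correct slot. Because the SRG structure and \Cref{thm:carteq} do all the conceptual lifting, once the ordering convention is pinned down the verification reduces to a routine, if lengthy, entrywise comparison, best delegated to the symbolic computation mentioned in the introduction.
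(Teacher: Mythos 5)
Your proposal is correct and follows exactly the paper's own route: apply \Cref{cor:drgEqui} to each strongly regular factor to obtain the $3\times 3$ quotient matrices, then invoke \Cref{thm:carteq} to conclude that $G_1\times G_2$ is $(d_{G_1},d_{G_2})$-equitable with quotient matrix $Q_1\otimes I_3+I_3\otimes Q_2$, which expands to the displayed $9\times 9$ matrix. The only difference is cosmetic: you additionally spot-check a few entries of the Kronecker sum, which the paper leaves implicit.
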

\begin{proof}
  By \Cref{cor:drgEqui}, $G_1$ is $d_{G_1}$-equitable with the quotient matrix $Q_1$ and 
  $G_2$ is $d_{G_2}$-equitable with the quotient matrix $Q_2$, where
  \begin{align*}
    Q_1&=\begin{pmatrix}
      0 & 1 & 0 \\
      k_1 & a_1 &c_1\\
      0 & k_1-a_1-1 &k_1-c_1
    \end{pmatrix},  \\
    Q_2&=\begin{pmatrix}
      0 & 1 & 0 \\
      k_2 & a_2 &c_2\\
      0 & k_2-a_2-1 &k_2-c_2
    \end{pmatrix}.
  \end{align*}

  By \Cref{thm:carteq}, $G_1\times G_2$ is also $(d_{G_1},d_{G_2})$-equitable with the quotient matrix $Q_{1,2}$, where
  \begin{align*}
    Q_{1,2}&=Q_1\otimes I_3 + I_3\otimes Q_2\\  
    &=\scalebox{0.7}{$\left(
      \begin{array}{ccccccccc}
       0 & 1 & 0 & 1 & 0 & 0 & 0 & 0 & 0 \\
       k_2 & a_2 & c_2 & 0 & 1 & 0 & 0 & 0 & 0 \\
       0 & -a_2+k_2-1 & k_2-c_2 & 0 & 0 & 1 & 0 & 0 & 0 \\
       k_1 & 0 & 0 & a_1 & 1 & 0 & c_1 & 0 & 0 \\
       0 & k_1 & 0 & k_2 & a_1+a_2 & c_2 & 0 & c_1 & 0 \\
       0 & 0 & k_1 & 0 & -a_2+k_2-1 & a_1-c_2+k_2 & 0 & 0 & c_1 \\
       0 & 0 & 0 & -a_1+k_1-1 & 0 & 0 & k_1-c_1 & 1 & 0 \\
       0 & 0 & 0 & 0 & -a_1+k_1-1 & 0 & k_2 & a_2-c_1+k_1 & c_2 \\
       0 & 0 & 0 & 0 & 0 & -a_1+k_1-1 & 0 & -a_2+k_2-1 & -c_1-c_2+k_1+k_2 \\
      \end{array}
      \right)$}.
  \end{align*}
\end{proof}

Using \Cref{cor:srgcar}, we can calculate the average hitting times between any pair of vertices in the Cartesian product of two strongly regular graphs. 
However, when dealing with general parameters $(n_1, k_1, a_1, c_1)$ and $(n_2, k_2, a_2, c_2)$, the resulting equation becomes too long to display in full. 
Therefore, we provide specific results for the case where both graphs are identical, denoted as $G_1 = G_2 = G$.

\begin{cor}
  Let $G$ be the strongly regular graph with parameters $(n,k,a,c)$.
  Then, 
  \begin{align*}
    &H(G\times G;((u_1,v_1),(u_2,v_2)))\\
    &=\begin{cases}
      0 & (u_1,v_1)=(u_2,v_2)\\
      n^2-1 & d_{G\times G}((u_1,v_1),(u_2,v_2))=1\\
      \frac{-c \left(n^2-1\right) (a-2 k-1)+c^2 \left(n^2-1\right)+k (k+n+1)}{c (-a+c+2 k)} & (d_{G\times G}((u_1,v_1),(u_2,v_2))=2)\land(i)\\
      \frac{-a \left(n^2+n-2\right)+c \left(n^2+n-2\right)+2 k n^2+k n-4 k+n^2-n}{-a+c+2 k} & (d_{G\times G}((u_1,v_1),(u_2,v_2))=2)\land(ii)\\
      \frac{c \left(-a \left(n^2+n-2\right)+k \left(2 n^2+n-4\right)+n^2-1\right)+c^2 \left(n^2+n-2\right)+k (k+n+1)}{c (-a+c+2 k)}& d_{G\times G}((u_1,v_1),(u_2,v_2))=3\\
      \frac{c \left(-a \left(n^2+n-2\right)+k \left(2 n^2+n-4\right)+n^2+n-2\right)+c^2 \left(n^2+n-2\right)+k (2 k+n+2)}{c (-a+c+2 k)}  & d_{G\times G}((u_1,v_1),(u_2,v_2))=4,
    \end{cases}
  \end{align*}
  where condition $(i)$ is $(u_1=u_2) \lor (v_1=v_2)$ and condition $(ii)$ is otherwise.
\end{cor}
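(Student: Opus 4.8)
The plan is to specialize \Cref{cor:srgcar} to $G_1 = G_2 = G$ and then feed the resulting quotient matrix into \Cref{thm:equi}. Setting $k_1 = k_2 = k$, $a_1 = a_2 = a$, and $c_1 = c_2 = c$ in \Cref{cor:srgcar} shows that $G \times G$ is $(d_G, d_G)$-equitable with an explicit $9 \times 9$ quotient matrix $Q$ indexed by the pairs $(i,j) \in \{0,1,2\} \times \{0,1,2\}$, where $f((u_1,v_1),(u_2,v_2)) = (d_G(u_1,u_2), d_G(v_1,v_2))$. Since the Cartesian product of two $k$-regular graphs is $2k$-regular, the valency of $G \times G$ is $2k$, so \Cref{thm:equi} gives $H(G\times G; ((u_1,v_1),(u_2,v_2))) = h_{f((u_1,v_1),(u_2,v_2))}$, where $\vec{H} = \vec{H}\!\left(\frac{1}{2k} Q\right) = -\vec{\allone}\left(\frac{1}{2k} Q^- - I\right)^{-1}$ and $Q^-$ deletes the row and column indexed by $(0,0)$.

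It pays to exploit symmetry before computing. The map $(i,j) \mapsto (j,i)$ that swaps the two factors is an automorphism of $G \times G$ and leaves the quotient matrix invariant, so $h_{(i,j)} = h_{(j,i)}$. Consequently the eight entries of $\vec{H}$ collapse to the five distinct values attached to $\{(0,1),(1,0)\}$, $\{(0,2),(2,0)\}$, $\{(1,1)\}$, $\{(1,2),(2,1)\}$, and $\{(2,2)\}$, which are exactly the five nonzero cases in the statement.

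Next I would translate each value of $f$ into the product distance via $d_{G \times G}((u_1,v_1),(u_2,v_2)) = d_G(u_1,u_2) + d_G(v_1,v_2)$, so that the distance is the sum of the two coordinates of $f$: distance $1$ for $\{(0,1),(1,0)\}$, distance $2$ for $\{(0,2),(2,0),(1,1)\}$, distance $3$ for $\{(1,2),(2,1)\}$, and distance $4$ for $\{(2,2)\}$. The distance-$2$ level splits precisely along the stated conditions: $(0,2)$ and $(2,0)$ force one coordinate to coincide (condition $(i)$, $u_1 = u_2 \lor v_1 = v_2$), whereas $(1,1)$ forces both coordinates to be at distance one (condition $(ii)$). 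This accounts for every branch of the case distinction.

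The remaining task is to evaluate the five values and recognize the stated closed forms. I would invert $\frac{1}{2k} Q^- - I$ symbolically and simplify using the strongly regular graph identity $k(k - a - 1) = (n - k - 1)c$, which lets each answer be written uniformly in $n, k, a, c$. The main obstacle is exactly this computational core: the $8 \times 8$ symbolic inversion and subsequent simplification are lengthy and best handled with computer algebra (Mathematica~\cite{Mathematica}). As a consistency check, the distance-$1$ value $n^2 - 1$ equals $N - 1$ with $N = n^2 = |V(G \times G)|$, matching the hitting time across an edge given by Foster's theorem in the edge-transitive instances such as the torus $C_5 \times C_5$.
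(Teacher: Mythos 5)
Your proposal is correct and follows essentially the same route as the paper: specialize \Cref{cor:srgcar} to $G_1=G_2=G$, apply \Cref{thm:equi} with valency $2k$ to get $\vec{H}(\frac{1}{2k}Q)$ by symbolic inversion, and sort the eight values of $h_{(i,j)}$ into the five distance-based cases. Your added observations (the swap symmetry $h_{(i,j)}=h_{(j,i)}$ and the explicit use of $k(k-a-1)=(n-k-1)c$ to express the answers in $n$) are sound refinements of the same argument rather than a different approach.
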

\begin{proof}
  By applying \Cref{cor:srgcar}, we know that $G\times G$ is a $(d_{G},d_{G})$-equitable graph with the quotient matrix $Q$ given as follows:
  \[
    Q=  {\scriptsize
  \left(
    \begin{array}{ccccccccc}
     0 & 1 & 0 & 1 & 0 & 0 & 0 & 0 & 0 \\
     k & a & c & 0 & 1 & 0 & 0 & 0 & 0 \\
     0 & -a+k-1 & k-c & 0 & 0 & 1 & 0 & 0 & 0 \\
     k & 0 & 0 & a & 1 & 0 & c & 0 & 0 \\
     0 & k & 0 & k & 2a & c & 0 & c & 0 \\
     0 & 0 & k & 0 & -a+k-1 & a-c+k & 0 & 0 & c \\
     0 & 0 & 0 & -a+k-1 & 0 & 0 & k-c & 1 & 0 \\
     0 & 0 & 0 & 0 & -a+k-1 & 0 & k & a-c+k & c \\
     0 & 0 & 0 & 0 & 0 & -a+k-1 & 0 & -a+k-1 & -2c+2k \\
    \end{array}
    \right)
  }.
  \]
  From \Cref{thm:equi}, 
\begin{align*}
  \vec{H}&=(h_{(0,1)},h_{(0,2)},h_{(1,0)},h_{(1,1)},h_{(1,2)},h_{(2,0)},h_{(2,1)},h_{(2,2)})\\
    &=\vec{H}(\frac{1}{2k}Q)  \\
    &=\left(\begin{array}{c}
      n^2-1 \\
      \frac{-c \left(n^2-1\right) (a-2 k-1)+c^2 \left(n^2-1\right)+k (k+n+1)}{c (-a+c+2 k)} \\
      n^2-1 \\
      \frac{-a \left(n^2+n-2\right)+c \left(n^2+n-2\right)+2 k n^2+k n-4 k+n^2-n}{-a+c+2 k} \\
      \frac{c \left(-a \left(n^2+n-2\right)+k \left(2 n^2+n-4\right)+n^2-1\right)+c^2 \left(n^2+n-2\right)+k (k+n+1)}{c (-a+c+2 k)}\\
      \frac{-c \left(n^2-1\right) (a-2 k-1)+c^2 \left(n^2-1\right)+k (k+n+1)}{c (-a+c+2 k)}\\
      \frac{c \left(-a \left(n^2+n-2\right)+k \left(2 n^2+n-4\right)+n^2-1\right)+c^2 \left(n^2+n-2\right)+k (k+n+1)}{c (-a+c+2 k)}\\
      \frac{c \left(-a \left(n^2+n-2\right)+k \left(2 n^2+n-4\right)+n^2+n-2\right)+c^2 \left(n^2+n-2\right)+k (2 k+n+2)}{c (-a+c+2 k)}  \\
    \end{array}\right)^{\top}.
\end{align*}
  Then, 
  \[
  H(G\times G;((u_1,v_1),(u_2,v_2)))=\begin{cases}
    0 & (u_1,v_1)=(u_2,v_2)\\
    h_{(d_G(u_1,u_2),d_G(v_1,v_2))} &(u_1,v_1)\neq(u_2,v_2)
  \end{cases}.\]
  By organizing the conditions and combining equal values, the desired equation can be obtained.
\end{proof}

\begin{rem}
  Using \Cref{thm:carteq}, we can also determine the average hitting times in the Cartesian product of two distance-regular graphs
  by their intersection arrays.
  However, as the number of parameters increases, the equation becomes more complex. 
  Therefore, it is not practical to represent the average hitting times by their parameters in the Cartesian product of two typical distance-regular graphs. 
  On the other hand, it may be possible to provide the bounds of the average hitting times in such a graph.
  The bounds of the average hitting times in a distance-regular graph were given by Koolen and Markowsky~\cite{KOOLEN2016737},
  but in the Cartesian product of two typical distance-regular graphs, such bounds are not known.   
\end{rem}

\section{Average hitting time of generalized Paley graph in two specific cases}\label{sec:genPaley}

In this section, first we give a definition of a generalized Paley graph 
and determine a function $f_{\alpha,k}$ such that a generalized Paley graph becomes an $f_{\alpha,k}$-equitable graph.
We also introduce the concept of a class of graphs called quasi-strongly regular graphs and define the parameters of a generalized Paley graph.
Following this, we compute the quotient matrix of two specific generalized Paley graph as $f_{\alpha,k}$-equitable graphs, using only these parameters.
We then proceed to calculate their average hitting times. 

\subsection*{Definition of generalized Paley graph and some of its properties}

Let $\FF_x$ be a finite field with order $x$, and let $\FF_x^*$ be the multiplicative group of $\FF_x$. 
Define the $k$th power of $\FF_x^*$ as $(\FF_x^*)^k =\{n^k \mid n \in \FF_x^*\}$. 

\begin{df}[Generalized Paley graph~\cite{10.1307/mmj/1242071694}]
  Let $x$ be a prime power, and let $k$ be an integer such that $x-1 \equiv 1 \pmod{2k}$. 
  The generalized Paley graph, denoted as $GP(x,k)$, is defined with the vertex set $\FF_x$, 
  and for any vertices $n$ and $m$ in $GP(x,k)$, $n \sim m$ if $n - m \in (\FF_x^*)^k$.   
\end{df}

Then, we obtain the following theorem.

\begin{thm}\label{thm:genEqui}
  Let $\alpha$ be the generator of $\FF_x^*$, and let $f_{\alpha,k}$ be a function such that 
  \begin{align*}
    f_{\alpha,k}:\FF_x\times\FF_x &\mapsto \{0,1,\ldots,k\}\\
    f_{\alpha,k}(u,v)&\mapsto 
    \begin{cases}
      i & v-u\in \alpha^{i-1}(\FF_x^*)^k\\
      0 & v-u=0
    \end{cases}.
  \end{align*}
  Then, $GP(x,k)$ is an $f_{\alpha,k}$-equitable graph.
\end{thm}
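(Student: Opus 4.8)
The plan is to exploit the affine automorphisms of $GP(x,k)$, which act on the partition $P_o$ so as to yield equitability and independence of the quotient matrix from $o$ simultaneously.

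First I would record the combinatorial structure of the parts. Since $(\FF_x^*)^k$ is the subgroup of $\FF_x^*$ of index $k$, the sets $\alpha^{i-1}(\FF_x^*)^k$ for $1\le i\le k$ are precisely its $k$ cosets and partition $\FF_x^*=\FF_x\setminus\{0\}$. Hence for $i\ge 1$ we have $F_i(o)=o+\alpha^{i-1}(\FF_x^*)^k$, a translate of a single coset, while $F_0(o)=\{v\mid v-o=0\}=\{o\}$. This gives the first required condition $F_0(o)=\{o\}$ immediately.

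Next I would reduce the remaining two conditions to the single case $o=0$ using translations. For fixed $o$, the map $\tau_o:v\mapsto v+o$ is an automorphism of $GP(x,k)$, since adjacency depends only on the difference $n-m$, which $\tau_o$ preserves. Moreover $\tau_o$ carries $F_i(0)=\alpha^{i-1}(\FF_x^*)^k$ to $o+\alpha^{i-1}(\FF_x^*)^k=F_i(o)$ and $F_0(0)=\{0\}$ to $F_0(o)=\{o\}$, so it is an isomorphism of the partitioned graph $(GP(x,k),P_0)$ onto $(GP(x,k),P_o)$ mapping part $i$ to part $i$. As such an isomorphism preserves both the equitable property and every entry of the quotient matrix, it suffices to prove that $P_0$ is equitable; the resulting quotient matrix is then automatically the same for every $o$, which settles the third condition.

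It remains to show that $P_0$ is equitable, for which I would use the multiplicative automorphisms. For $a\in(\FF_x^*)^k$, the map $\mu_a:v\mapsto av$ is an automorphism: if $n-m\in(\FF_x^*)^k$ then $an-am=a(n-m)\in(\FF_x^*)^k$, because $(\FF_x^*)^k$ is closed under multiplication. Each $\mu_a$ fixes $0$, and since $a(\FF_x^*)^k=(\FF_x^*)^k$ it stabilizes every coset $F_i(0)$ setwise; furthermore the subgroup $(\FF_x^*)^k$ acts simply transitively on each of its cosets, so $\{\mu_a\mid a\in(\FF_x^*)^k\}$ acts transitively on each part $F_i(0)$. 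Thus, given $v,v'$ in the same part $F_j(0)$, there is an automorphism $\mu_a$ fixing $0$, stabilizing every part setwise, and sending $v$ to $v'$; applying it gives $|N(v)\cap F_i(0)|=|N(\mu_a(v))\cap F_i(0)|=|N(v')\cap F_i(0)|$ for all $i$ (for $j=0$ the part is the singleton $\{0\}$, so the condition is vacuous), which is exactly equitability of $P_0$. The argument is structural, so I do not anticipate a serious obstacle; the point needing the most care is checking that the affine maps genuinely lie in $\Aut(GP(x,k))$—in particular that $GP(x,k)$ is undirected, which is what the divisibility hypothesis on $x$ and $k$ guarantees by forcing $-1\in(\FF_x^*)^k$—and that multiplication by the subgroup is transitive on each coset. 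Granting these, all three defining conditions of an $f_{\alpha,k}$-equitable graph hold.
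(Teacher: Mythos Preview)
Your proposal is correct and follows essentially the same approach as the paper: both arguments use the multiplicative automorphisms $v\mapsto av$ with $a\in(\FF_x^*)^k$ (fixing $0$ and stabilizing each coset setwise, transitive on each coset) to obtain equitability of $P_0$, and the additive translations $v\mapsto v+o$ to transport $P_0$ to $P_o$ and conclude that the quotient matrix is independent of $o$. The only difference is the order of presentation---you reduce to $o=0$ first via translations and then establish equitability there, while the paper proves equitability at $0$ first and then transfers---and your explicit remark that $-1\in(\FF_x^*)^k$ is what makes the graph undirected, which the paper leaves implicit.
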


\begin{proof}
  For any $l\in(\FF_x^*)^k$, we can consider an automorphism of $\FF_x$, denoted as $\tau_{l,\times}$, such that $\tau_{l,\times}(n)=ln$.
  Since $(\FF_x^*)^k$ is a multiplicative subgroup of $\FF_x^*$, for any vertices $u$ and $v$ in $GP(x,k)$,
\begin{align*}
  u\sim v &\Longleftrightarrow u-v\in(\FF_x^*)^k\\
  &\Longleftrightarrow \tau_{l,\times}(u-v)\in(\FF_x^*)^k\\
  &\Longleftrightarrow \tau_{l,\times}(u)\sim \tau_{l,\times}(v).
\end{align*}
Therefore, $\tau_{l,\times}$ is also an automorphism of $GP(x,k)$.
  Let \[\Pi=\{\tau_{l,\times}\mid l\in(\FF_x^*)^k\}\] be the automorphism set. 
  Since $\tau_{l,\times}(0)=0$ for any $l\in(\FF_x^*)^k$, $\tau_{l,\times}$ is a subset of the stabilizer subgroup of vertex $0$. 
  Define $P_0$ to be a partition based on the orbits from $\Pi$. 
  Then, we can denote $P_0$ as follows:
  \[
    P_0=\{V_0=\{0\},V_1=(\FF_x^*)^k,V_2=\alpha(\FF_x^*)^k,V_3=\alpha^2(\FF_x^*)^k,\ldots,V_{k}=\alpha^{k-1}(\FF_x^*)^k\}.
  \]
  
  For any integers $i$ and $j$ such that $0\leq i,j\leq k$ and for any $v$ and $v'$ in $V_i$, 
  there exists an automorphism $\tau_{\times,l}$ such that $\tau_{\times,l}(v)=v'$ and $\tau_{\times,l}(V_j)=V_j$.
  Therefore, 
  \[
  |N(v)\cap V_j|=|N(\tau_{\times,l}(v))\cap \tau_{\times,l}(V_j)|=|N(v')\cap V_j|.
  \]
  This shows that $P_0$ is a stabilized equitable partition of $GP(x,k)$ centered on $0$. Here, let $F_{i}(n)=\{v\in \FF_x\mid f_{\alpha,k}(n,v)=i\}$, where $n$ is for any element of $\FF_x$. 
  Then, from the definition of $f_{\alpha,k}$, we obtain that $F_i(0)=\alpha^{i-1}(\FF_x^*)^k$. 
  Therefore, we can also denote $P_0$ by using $F_i(0)$ as follows:
  \[
    P_0=\{F_0(0)=\{0\},F_1(0),\ldots,F_k(0)\}.
  \]
  Next, we prove that 
  \[
    P_n=\{F_0(n)=\{n\},F_1(n),\ldots,F_k(n)\}
  \]
  is a stabilized equitable partition of $GP(x,k)$ centered on $n$.
  Let $\sigma_{n,+}$ be a permutation of $\FF_x$ such that $\sigma_{n,+}(a)=a+n$, where $n$ is for any element of $\FF_x$.
  From the definition of $GP(x,k)$, for any vertices $u$ and $v$ in $GP(x,k)$,
\begin{align*}
  u\sim v &\Longleftrightarrow u-v\in(\FF_x^*)^k\\
  &\Longleftrightarrow \sigma_{n,+}(u)-\sigma_{n,+}(v)\in(\FF_x^*)^k\\
  &\Longleftrightarrow \sigma_{n,+}(u)\sim \sigma_{n,+}(v).
\end{align*}
  Therefore, $\sigma_{n,+}$ is a graph automorphism of $GP(x,k)$.
  Since $P_0$ is an equitable partition,
  \[
    \sigma_{n,+}(P_0)=\{\sigma_{n,+}(F_0(0))=\{n\},\sigma_{n,+}(F_1(0)),\ldots,\sigma_{n,+}(F_k(0))\}
  \]
  is also a stabilized equitable partition centered on $n$.
  Additionally,
  \begin{align*}
    v\in F_i(0)&\Longleftrightarrow f(0,v)=i\\
    &\Longleftrightarrow f(n,n+v)=i\\
    &\Longleftrightarrow \sigma_{n,+}(v)\in F_i(n).
  \end{align*}
  Therefore,
  \[
    \sigma_{n,+}(P_0)=\{F_0(n),F_1(n),\ldots,F_k(n)\}=P_n.
  \]
  Hence, for every vertex $n$, $P_n$ is a stabilized equitable partition of $GP(x,k)$ centered on $n$.
  Finally, we check the condition of the quotient matrix.
  Let the quotient matrix of $P_n$ be $q(n)_{ij}$.
  From the definition of the quotient matrix,
  \[
  q(n)_{ij}=|N(v)\cap F_i(n)|,
  \]
  where $v\in F_j(n)$.
  Since $\sigma_{n,+}(F_i(0))=F_i(n)$, we obtain 
  \begin{align*}
    q(n)_{ij}&=|N(v)\cap F_i(n)|\\
    &=|N(\sigma_{n,+}(v-n))\cap \sigma_{n,+}(F_i(0))|\\
    &=|N(v-n)\cap F_i(0)|=q(0)_{ij}.
  \end{align*}
  This implies that the quotient matrix of $P_n$ does not depend on the choice of $n$.
  Therefore, $GP(x,k)$ is an $f_{\alpha,k}$-equitable graph.
\end{proof}

The graph $GP(x,k)$ also belongs to the category of graphs known as quasi-strongly regular.
\begin{df}[Quasi-strongly regular~\cite{golightly1997family}]
  Let $G$ be a $k$-regular graph with order $n$. 
  $G$ is called quasi-strongly regular with parameter $(n,k,c_1;\{c_2,c_3,\ldots,c_l\})$ if $G$ satisfies the following for all pairs of vertices $u,v\in G$: 
  \[
    |N(u)\cap N(v)|\in\begin{cases}
      \{c_1\} & u\sim v,\\
      \{c_2,c_3,\ldots,c_l\} &u\nsim v.
    \end{cases}
  \]
\end{df}

When considering $GP(x,k)$, we can define $F_{i}(n)$ as the set $\{v \in \FF_x \mid f_{\alpha,k}(n,v) = i\}$ and create partitions for each vertex $n$ as follows:
\begin{align*}
  P_n&=\{F_0(n)=\{n\},F_1(n),\ldots,F_k(n)\}.
\end{align*}
Since 
\[
F_1(n)=\{u\in V\mid u-n\in (\FF_x^*)^k\} =N(n)
\]
and $GP(x,k)$ is $f_{\alpha,k}$-equitable, for every vertex $v$ in $N(n)$, $|N(v) \cap N(n)|$ does not depend on the choice of vertex $v$.
Therefore, $GP(x,k)$ is quasi-strongly regular. Let $c_i = |N(0) \cap N(v)|$, where $v \in F_i(0)$. 
Then, the parameters of $GP(x,k)$ as a quasi-strongly regular graph are $(x, \frac{x-1}{k}, c_1; \{c_2, c_3, \ldots, c_k\})$.

\subsection*{Some elements of quotient matrix of $GP(x,k)$ as $f_{\alpha,k}$-equitable graph}

From \Cref{thm:equi} and \Cref{thm:genEqui}, it is sufficient to know the quotient matrix of $P_0$ to compute the average hitting time of $GP(x,k)$. 
To compute it, we first determine some elements of the quotient matrix of $P_0$ by the parameters of $GP(x,k)$ as quasi-strongly regular.
In the following, we denote $P(x,k)=\{F_0,F_1,\ldots,F_k\}$ as the partition $P_0$ of $GP(x,k)$ and $q(x,k)_{ij}$ as the quotient matrix of $P_0$.

\begin{lem}\label{lem:quo1}
  \begin{align*}
    q(x,k)_{0,i}&=
    \begin{cases}
      1 & i=1\\
      0 & i\neq 1  
    \end{cases}\\
    q(x,k)_{i,0}&=
    \begin{cases}
      \frac{x-1}{k} & i=1\\
      0 & i\neq 1  
    \end{cases}\\
    q(x,k)_{1,i}&=c_i
  \end{align*}
  Additionally, for any integers $i,j$ such that $i,j\geq 1$,
  \begin{align*}
    q_{ij}&=q_{ji}.      
  \end{align*}
\end{lem}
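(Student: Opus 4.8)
The plan is to obtain the three explicit entries by directly unwinding the definition of adjacency in $GP(x,k)$ together with the description $F_i = F_i(0) = \alpha^{i-1}(\FF_x^*)^k$ of the blocks (and $F_0 = \{0\}$), and then to deduce the symmetry relation from the edge-counting identity \Cref{eq:edge} combined with the observation that all nonzero blocks have equal cardinality.

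First I would compute the column and row attached to $F_0$. For $q(x,k)_{0,i} = |N(v)\cap F_0|$ with $v\in F_i$, since $F_0=\{0\}$ this count is $1$ if $v\sim 0$ and $0$ otherwise; and $v\sim 0$ means $v=v-0\in(\FF_x^*)^k=F_1$, which happens exactly when $i=1$, giving the first formula. Dually, $q(x,k)_{i,0}=|N(0)\cap F_i|$, and since $N(0)=\{u\mid u-0\in(\FF_x^*)^k\}=(\FF_x^*)^k=F_1$, all neighbors of $0$ lie in $F_1$; hence $q(x,k)_{i,0}=0$ for $i\neq 1$ and $q(x,k)_{1,0}=|F_1|=\frac{x-1}{k}$, the common size of each coset of $(\FF_x^*)^k$ in $\FF_x^*$. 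For $q(x,k)_{1,i}=|N(v)\cap F_1|$ with $v\in F_i$, I would substitute $F_1=N(0)$ to rewrite this as $|N(v)\cap N(0)|$, which is exactly the defining quantity $c_i$ for $v\in F_i$.

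Finally, for the symmetry $q_{ij}=q_{ji}$ with $i,j\geq 1$, I would apply \Cref{eq:edge} to the blocks $F_i$ and $F_j$, which yields $|F_i|\,q_{ji}=|F_j|\,q_{ij}$. Because each block $F_i=\alpha^{i-1}(\FF_x^*)^k$ with $i\geq 1$ is a coset of the subgroup $(\FF_x^*)^k$ of $\FF_x^*$, they all share the cardinality $\frac{x-1}{k}$; cancelling this common nonzero factor gives $q_{ij}=q_{ji}$. The only step that uses anything beyond a direct reading of the definitions is this last one, and its sole subtlety is recognizing that the blocks $F_1,\dots,F_k$ are equinumerous cosets, which is precisely what makes the edge-counting identity collapse to genuine symmetry of the submatrix; I expect this to be the main, though mild, point to pin down.
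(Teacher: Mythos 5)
Your proposal is correct and follows essentially the same route as the paper's proof: identifying $N(0)=(\FF_x^*)^k=F_1$ to get the $F_0$ row and column and the identity $q(x,k)_{1,i}=|N(v)\cap N(0)|=c_i$, then invoking \Cref{eq:edge} together with the fact that $F_1,\ldots,F_k$ are equinumerous cosets of size $\frac{x-1}{k}$ to obtain $q_{ij}=q_{ji}$.
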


\begin{proof}
Because $N(0)=(\FF_x^*)^k=F_1$, we obtain $q(x,k)_{0,1}=1,q(x,k)_{1,0}=\frac{x-1}{k}$ and for any integer $i$ such that $2\leq i\leq k$, 
$q(x,k)_{0,i}=q(x,k)_{i,0}=0$.
From the definition of $c_i$, for any $1\leq i\leq k$,
\begin{equation*}\label{eq:quo1}
  v\in F_i\Longrightarrow c_i=|N(0)\cap N(v)|=|F_1\cap N(v)|=q(x,k)_{1,i}.  
\end{equation*}

From $|F_1|=|F_2|=\ldots=|F_k|=\frac{x-1}{k}$ and \Cref{eq:edge}, we have 
\begin{equation*}\label{eq:quo2}
  q_{ij}=q_{ji}.    
\end{equation*}
\end{proof}

We can also determine $q(x,k)_{i,i}$.

\begin{lem}\label{lem:quo2}
  \[
    q(x,k)_{i,i}=c_{k-i+2} 
  \]
\end{lem}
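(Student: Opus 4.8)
The plan is to compute $q(x,k)_{i,i}=|N(v)\cap F_i|$ for a convenient representative $v\in F_i$ and to recognize the resulting count as exactly the cyclotomic-type quantity defining $c_{k-i+2}$. Since $GP(x,k)$ is $f_{\alpha,k}$-equitable (\Cref{thm:genEqui}), the number $|N(v)\cap F_i|$ does not depend on the choice of $v\in F_i$, so I would take $v=\alpha^{i-1}$, which lies in $F_i=\alpha^{i-1}(\FF_x^*)^k$ because $1\in(\FF_x^*)^k$. Writing a neighbour of $v$ as $u=v+\beta$ with $\beta\in(\FF_x^*)^k=F_1$, the membership condition $u\in F_i$ becomes, after dividing by $\alpha^{i-1}$, the condition $\alpha^{-(i-1)}u=1+\alpha^{-(i-1)}\beta\in(\FF_x^*)^k$.

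First I would substitute $\gamma=\alpha^{-(i-1)}\beta$. As $\beta$ runs over $F_1$ this is a bijection onto $\alpha^{-(i-1)}(\FF_x^*)^k$, which is the coset $F_{k-i+2}$ once the index is read modulo $k$ in the range $\{1,\dots,k\}$ (here one uses $\alpha^{k}\in(\FF_x^*)^k$). This yields
\[
  q(x,k)_{i,i}=\bigl|\{\gamma\in F_{k-i+2}:1+\gamma\in F_1\}\bigr|.
\]
Next I would carry out the parallel manipulation for $c_j$. Taking $w=\alpha^{j-1}\in F_j$, a common neighbour $t\in N(0)\cap N(w)$ satisfies $t\in F_1$ and $\delta:=t-\alpha^{j-1}\in F_1$; setting $\epsilon=\alpha^{j-1}\delta^{-1}$ gives a bijection $F_1\to F_j$ (as $\delta$ ranges over $F_1$ with $t=\alpha^{j-1}+\delta$), under which $\epsilon+1=t\delta^{-1}$, so the surviving constraint $t\in F_1$ is equivalent to $\epsilon+1\in F_1$. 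Hence $c_j=|\{\epsilon\in F_j:1+\epsilon\in F_1\}|$. Comparing the two counts and setting $j=k-i+2$ gives $q(x,k)_{i,i}=c_{k-i+2}$. Throughout I would use that $-1\in(\FF_x^*)^k$, so that each $F_r$ is closed under negation and $GP(x,k)$ is undirected, which is what lets the sign conventions in the two shifts be reconciled.

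The main obstacle I anticipate is purely bookkeeping: tracking the coset index $-(i-1)\bmod k$ and confirming it equals $k-i+2$ reduced into $\{1,\dots,k\}$, together with justifying the two scaling bijections. In particular I would check the wrap-around case $i=1$, where $k-i+2=k+1$ must be read as $1$, so that the identity specializes to $q(x,k)_{1,1}=c_1$ (consistent with $F_1=N(0)$). Once the index arithmetic and the bijections are pinned down, the equality follows immediately.
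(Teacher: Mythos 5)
Your proof is correct, and while it rests on the same underlying symmetry as the paper's, the implementation is genuinely different and more elementary. The paper lifts $q(x,k)_{ij}$ to the count $S(a_1^k+\alpha^{i-1}a_2^k-\alpha^{j-1}a_3^k)$ of nonzero solutions of a three-term diagonal equation, then uses invariance of that count under multiplying the whole equation by a power of $\alpha$ and under relabelling the variables to move the exponent pattern $(0,i-1,i-1)$ to $(0,k-i+1,0)$, which is the pattern defining $c_{k-i+2}$; this requires the intermediate identities $|S(\cdots)|=(x-1)k^2q(x,k)_{ij}$ and the reduction to the $j=1$ case. You stay at the level of the cosets themselves: the two scaling bijections $\beta\mapsto\alpha^{-(i-1)}\beta$ and $\delta\mapsto\alpha^{j-1}\delta^{-1}$ identify both $q(x,k)_{i,i}$ and $c_{k-i+2}$ with the single cyclotomic number $\lvert\{\gamma\in F_{k-i+2}\mid 1+\gamma\in F_1\}\rvert$, avoiding the $S(\cdot)$ formalism and the $k^2$ and $(x-1)$ bookkeeping entirely. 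Your bijections check out: independence of the representative $v\in F_i$ is exactly the equitability established in \Cref{thm:genEqui}; $\delta^{-1}\in(\FF_x^*)^k$ so multiplication by it fixes the subgroup $F_1$ and hence $\epsilon+1=t\delta^{-1}\in F_1$ iff $t\in F_1$; and $\alpha^{-(i-1)}(\FF_x^*)^k=\alpha^{k-i+1}(\FF_x^*)^k=F_{k-i+2}$ since $\alpha^{-k}\in(\FF_x^*)^k$. Your explicit treatment of the wrap-around (reading $c_{k+1}$ as $c_1$ when $i=1$, consistent with $q(x,k)_{1,1}=c_1$ from \Cref{lem:quo1}) is a point the paper's version leaves implicit. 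In short: same multiplicative symmetry of the cyclotomic classes, but your route is a direct bijective proof where the paper's is a solution-counting one.
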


\begin{proof}  
From an algebraic perspective, $|N(v)\cap F_i|$ is the number of pairs of solutions $(a_1^k,\alpha^{i-1}a_2^k)$ to the following polynomial with coefficients in the finite field $\FF_x$, neither of which is $0$:
\[
v-a_1^k=\alpha^{i-1}a_2^k.
\] 
We denote $S(g(a_1,a_2,\ldots,a_l))$ as the number of sets of solutions $(a_1,a_2,\ldots,a_l)$ to equation $g(a_1,a_2,\ldots,a_l)=0$, with no $a_i$ equal to $0$.
Then, when $v\in F_j$, 
\begin{align*}
  q(x,k)_{ij}=|N(v)\cap F_i|&=\frac{|S(a_1^k+\alpha^{i-1}a_2^k-v)|}{k^2},
\end{align*}
so
\[
  |S(a_1^k+\alpha^{i-1}a_2^k-v)|=k^2q(x,k)_{ij}.  
\]
From this equation, we obtain that $|S(a_1^k+\alpha^{i-1}a_2^k-v)|$ does not depend on the choice of $v\in F_j$.
Here, for any $v\in F_j$, there exists $m\in \FF_x^*$ such that $v=\alpha^{j-1}m^k$.
Therefore,
\begin{align}\label{eq:quoSol}
  |S(a_1^k+\alpha^{i-1}a_2^k-\alpha^{j-1}a_3^k)|&=\sum_{m=1}^{x-1}|S(a_1^k+\alpha^{i-1}a_2^k-\alpha^{j-1}m^k)|\nonumber\\
  &=(x-1)|S(a_1^k+\alpha^{i-1}a_2^k-v)|\nonumber\\
  &=(x-1)k^2q(x,k)_{ij}.  
\end{align}

From \Cref{lem:quo1}, if $j=1$ then
\begin{equation}\label{eq:quo5}
  |S(a_1^k+\alpha^{i-1}a_2^k-a_3^k)|=(x-1)k^2c_i.
\end{equation}
Here, 
\begin{align*}
  a_1^k+\alpha^{i-1}a_2^k-\alpha^{j-1}a_3^k=0 \Longleftrightarrow \alpha^la_1^k+\alpha^{i-1+l}a_2^k-\alpha^{j-1+l}a_3^k=0.
\end{align*}
Therefore, 
\begin{equation}\label{eq:quo4}
  |S(a_1^k+\alpha^{i-1}a_2^k-\alpha^{j-1}a_3^k)|=|S(\alpha^{l}a_1^k+\alpha^{i-1+l}a_2^k-\alpha^{j-1+l}a_3^k)|.
\end{equation}
From \Cref{eq:quoSol,eq:quo5,eq:quo4},
\begin{align*}
  (x-1)k^2q(x,k)_{i,i}&=|S(a_1^k+\alpha^{i-1}a_2^k-\alpha^{i-1}a_3^k)|\\
  &=|S(\alpha^{k-(i-1)}a_1^k+\alpha^{k}a_2^k-\alpha^{k}a_3^k)|\\
  &=|S(\alpha^{k-(i-1)}a_1^k+a_2^k-a_3^k)|\\
  &=|S(a_1^k+\alpha^{k-(i-1)}a_2^k-a_3^k)|\\
  &=(x-1)k^2c_{k-i+2}.
\end{align*}
Hence,
\begin{equation*}
  q(x,k)_{i,i}=c_{k-i+2}.
\end{equation*}
\end{proof}

Especially, we can compute all of the entries of $q(x,3)_{ij}$ and $q(x,4)_{ij}$ from their parameters.

\subsection*{Average hitting time of $GP(x,3)$}

In this subsection, we compute the average hitting time of $GP(x,3)$ by its parameter as a quasi-strongly regular graph.

\begin{thm}\label{thm:gen3}
  Let $\alpha$ be the generator of $\FF_x^*$, and let $c_i=|N(v)\cap N(u)|$, where $u-v\in \alpha^{i-1}(\FF_x^*)^3$ and $i$ is for any integer such that $1\leq i\leq 3$.
  \[
    H(GP(x,3);(v,u))=\begin{cases}
      0 & u-v=0\\
      x-1 & u-v\in (\FF_x^*)^3\\
      \frac{\frac{x-1}{3} (c_1 (3 c_2+3 c_3+2)+3 c_2 (c_3+1)+4 c_3+2)}{c_2 (c_1+c_3+1)+(c_1+1) c_3}& u-v\in\alpha(\FF_x^*)^k\\
      \frac{\frac{x-1}{3}(c_1 (3 c_2+3 c_3+2)+c_2 (3 c_3+4)+3 c_3+2)}{c_2 (c_1+c_3+1)+(c_1+1) c_3}& u-v\in\alpha^2(\FF_x^*)^k
    \end{cases}  
    \]
\end{thm}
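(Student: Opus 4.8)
The plan is to apply \Cref{thm:equi} directly, so the entire task reduces to (a) writing down the $4\times 4$ quotient matrix $q(x,3)_{ij}$ of $P_0$ in terms of the parameters $c_1,c_2,c_3$, and (b) computing $\vec{H}(\tfrac{1}{k}Q)$ for this matrix. Since $GP(x,3)$ is $f_{\alpha,3}$-equitable by \Cref{thm:genEqui}, the valency is $k=\tfrac{x-1}{3}$ and the indexing set is $\{0,1,2,3\}$. First I would assemble $Q$ from the lemmas already proved. \Cref{lem:quo1} gives the entire $0$th row and column: $q_{0,1}=1$, $q_{1,0}=\tfrac{x-1}{3}$, all other boundary entries zero; it also gives the first row $q_{1,i}=c_i$ and the symmetry $q_{ij}=q_{ji}$ for $i,j\ge 1$. \Cref{lem:quo2} supplies the remaining diagonal entries $q_{i,i}=c_{k-i+2}=c_{5-i}$, so $q_{2,2}=c_3$ and $q_{3,3}=c_2$.

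The main obstacle is filling in the off-diagonal entry $q_{2,3}=q_{3,2}$, which is not directly handed to us by \Cref{lem:quo1} or \Cref{lem:quo2}. The clean way to obtain it is a column-sum (regularity) argument: every column of $Q$ sums to the valency $k=\tfrac{x-1}{3}$, since \Cref{eq:deg} says $\deg(v)=\sum_i q_{ij}$ for $v\in F_j$ and the graph is regular. Applying this to column $2$, I get $q_{0,2}+q_{1,2}+q_{2,2}+q_{3,2}=\tfrac{x-1}{3}$, i.e. $0+c_2+c_3+q_{3,2}=\tfrac{x-1}{3}$, which pins down $q_{3,2}=\tfrac{x-1}{3}-c_2-c_3$, and by symmetry $q_{2,3}$ equals the same. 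As a consistency check one can verify column $3$: $q_{1,3}+q_{2,3}+q_{3,3}=c_3+(\tfrac{x-1}{3}-c_2-c_3)+c_2=\tfrac{x-1}{3}$, as required.

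Having assembled
\[
Q=\begin{pmatrix}
0 & 1 & 0 & 0\\
\tfrac{x-1}{3} & c_1 & c_2 & c_3\\
0 & c_2 & c_3 & \tfrac{x-1}{3}-c_2-c_3\\
0 & c_3 & \tfrac{x-1}{3}-c_2-c_3 & c_2
\end{pmatrix},
\]
the remaining step is purely computational: form $Q^-$ by deleting the $0$th row and column, scale by $\tfrac{1}{k}=\tfrac{3}{x-1}$, and evaluate $\vec{H}=-\vec{\allone}\bigl(\tfrac{1}{k}Q^- - I\bigr)^{-1}$ exactly as in the definition of $\vec{H}(\cdot)$ following \Cref{thm:equi}. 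This is a $3\times 3$ linear solve; inverting $\tfrac{1}{k}Q^- - I$ and multiplying against $-\vec{\allone}$ yields the three components $h_1,h_2,h_3$. I expect the $h_1$ component to collapse to $x-1=3k$ (the standard value $n-1$ on a connected regular graph when the starting relation is the neighbor relation, which here arises because $F_1=N(0)$), while $h_2$ and $h_3$ produce the two fractional expressions in the statement with common denominator $c_2(c_1+c_3+1)+(c_1+1)c_3$. The final line of the proof then invokes \Cref{thm:equi}: $H(GP(x,3);(v,u))=h_{f_{\alpha,3}(v,u)}$, and translating the index $f_{\alpha,3}(v,u)=i$ back into the membership condition $u-v\in\alpha^{i-1}(\FF_x^*)^3$ (with the $u-v=0$ case giving $h_0=0$) gives exactly the four-case formula. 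The only real work is the symbolic matrix inversion, which the authors note was carried out in Mathematica; the conceptual content is entirely in the determination of $q_{2,3}$ via regularity together with \Cref{lem:quo1} and \Cref{lem:quo2}.
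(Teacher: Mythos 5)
Your proposal is correct and follows essentially the same route as the paper: assemble $q(x,3)_{ij}$ from \Cref{lem:quo1} and \Cref{lem:quo2}, pin down the one missing off-diagonal entry $q_{2,3}=q_{3,2}$ by the column-sum/regularity argument of \Cref{eq:deg}, then compute $\vec{H}$ symbolically and invoke \Cref{thm:equi}. The only cosmetic difference is that you write that entry as $\tfrac{x-1}{3}-c_2-c_3$ while the paper writes $1+c_1$; these agree because the column-$1$ sum gives $\tfrac{x-1}{3}=1+c_1+c_2+c_3$.
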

\begin{proof}
From \Cref{lem:quo1} and \Cref{lem:quo2}, there are only two entries that have not been computed, namely, $q(x,3)_{2,3}$ and $q(x,3)_{3,2}$.
However, from \Cref{eq:deg} and the fact that the valency of $GP(x,3)$ is $\frac{x-1}{3}$,
\[
\sum_{i=0}^3 q(x,3)_{i,2}=\sum_{i=0}^3 q(x,3)_{i,3}  =\sum_{i=0}^3 q(x,1)_{i,3}=1+c_1+c_2+c_3.
\]
Therefore, 
\[
q(x,3)_{ij}=
\begin{pmatrix}
  0&1&0&0\\
  \frac{x-1}{3}&c_1&c_2&c_3\\
  0&c_2&c_3&1+c_1\\
  0&c_3&1+c_1&c_2  
\end{pmatrix}.  
\]
From \Cref{thm:genEqui}, $GP(x,3)$ is an $f_{\alpha,3}$-equitable graph.
We define $(h_1,h_2,h_3)$ as follows:
\begin{align*}
  (h_1,h_2,h_3)&=\vec{H}(q(x,3)_{ij})\\
  &=\begin{pmatrix}
    x-1\\
    \frac{\frac{x-1}{3} (c_1 (3 c_2+3 c_3+2)+3 c_2 (c_3+1)+4 c_3+2)}{c_2 (c_1+c_3+1)+(c_1+1) c_3}\\
    \frac{\frac{x-1}{3}(c_1 (3 c_2+3 c_3+2)+c_2 (3 c_3+4)+3 c_3+2)}{c_2 (c_1+c_3+1)+(c_1+1) c_3}
  \end{pmatrix}^{\top}.
\end{align*} 
  From \Cref{thm:equi}, since $GP(x,3)$ is $f_{\alpha,3}$-equitable,
  \begin{align*}
    H(GP(x,3);(v,u))&=\begin{cases}
      0 & u=v\\
      h_{f_{\alpha,k}(u,v)} &u\neq v
    \end{cases}.
  \end{align*}
  By organizing the conditions, the desired equation can be obtained.
\end{proof}

\subsection*{Average hitting time of $GP(x,4)$}

In this subsection, we compute the average hitting time of $GP(x,3)$ 
by its parameter as a quasi-strongly regular graph.

\begin{thm}\label{thm:gen4}
  Let $\alpha$ be the generator of $\FF_x^*$, let $c_i=|N(v)\cap N(u)|$, where $u-v\in \alpha^{i-1}(\FF_x^*)^4$ and, for any integer $i$ such that $1\leq i\leq 4$, 
  define $h_i$ as follows:
    \begin{align*}
      h_1&= x-1\\
      h_2&= \frac{(x-1)\left(
        \begin{array}{c}
           1280 c_3^3-64 c_3^2 (8 c_4+7 x-13)\\
           +4 c_3 \left(64 c_4^2+32 c_4 (x-2)+9 x^2-48 x+39\right)\\
           -(x-1) \left(64 c_4^2-24 c_4-9 x+9\right)
        \end{array}
      \right)
        }{64 \left(
        \begin{array}{c}
          20 c_3^3+c_3^2 (-8 c_4-7 x+7)+4 c_3 c_4^2\\
          +2 c_3 c_4 (x-1)+\frac{9}{16} c_3 (x-1)^2-c_4^2 (x-1)  
        \end{array}        
          \right)}\\
      h_3&= \frac{(x-1) \left(
      \begin{array}{c}
        1280 c_3^3-64 c_3^2 (8 c_4+7 x-10)\\
        +4 c_3 \left(64 c_4^2+32 c_4 x+9 x^2-42 x+33\right)\\
        -2 \left(32 c_4^2+9\right) x+9 x^2+9  
      \end{array}  
      \right)}{64 \left(
      \begin{array}{c}
        20 c_3^3+c_3^2 (-8 c_4-7 x+7)+4 c_3 c_4^2\\
        +2 c_3 c_4 (x-1)+\frac{9}{16} c_3 (x-1)^2-c_4^2 (x-1)  
      \end{array}  
        \right)}\\
      h_4&= \frac{(x-1) \left(
        \begin{array}{c}
          1280 c_3^3-64 c_3^2 (8 c_4+7 x-9)\\
          +4 c_3 \left(64 c_4^2+32 c_4 x+9 \left(x^2-4 x+3\right)\right)\\
          -(x-1) \left(64 c_4^2+24 c_4-9 x+9\right)  
        \end{array}
      \right)}{64 \left(
      \begin{array}{c}
        20 c_3^3+c_3^2 (-8 c_4-7 x+7)+4 c_3 c_4^2\\
        +2 c_3 c_4 (x-1)+\frac{9}{16} c_3 (x-1)^2-c_4^2 (x-1)  
      \end{array}  
        \right)}.
    \end{align*}
  Then, 
  \[
    H(GP(x,4);(v,u))=\begin{cases}
      0 & u-v=0\\
      h_1 & u-v\in (\FF_x^*)^4\\
      h_2& u-v\in \alpha(\FF_x^*)^4\\
      h_3& u-v\in \alpha^2(\FF_x^*)^4\\
      h_4& u-v\in \alpha^3(\FF_x^*)^4\\
    \end{cases}.
    \]    
\end{thm}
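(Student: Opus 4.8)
The plan is to lean on \Cref{thm:genEqui} and \Cref{thm:equi}: once $GP(x,4)$ is known to be $f_{\alpha,4}$-equitable, all of its average hitting times are encoded in the single vector $\vec{H}\!\left(\tfrac1V q(x,4)\right)$, where $V=\tfrac{x-1}{4}$ is the valency and $q(x,4)$ is the $5\times5$ quotient matrix of $P(x,4)=\{F_0,\dots,F_4\}$ indexed by $\{0,1,2,3,4\}$. Thus the whole task splits into (i) pinning $q(x,4)$ down in terms of $x,c_3,c_4$, and (ii) inverting a $4\times4$ matrix. For step (i) I would first insert the entries supplied by the earlier lemmas: \Cref{lem:quo1} gives the border ($q_{0,1}=1$, $q_{1,0}=V$, $q_{1,i}=q_{i,1}=c_i$ for $i\ge1$, all other $0$th row/column entries zero) together with the symmetry $q_{ij}=q_{ji}$ for $i,j\ge1$, while \Cref{lem:quo2} gives the diagonal $q_{2,2}=c_4$, $q_{3,3}=c_3$, $q_{4,4}=c_2$. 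Only the three symmetric off-diagonal entries $q_{2,3}=q_{3,2}$, $q_{2,4}=q_{4,2}$, $q_{3,4}=q_{4,3}$ then remain unknown.

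Next I would exploit \Cref{eq:deg}, which forces every column of $q(x,4)$ to sum to $V$. Writing out the sums of columns $2,3,4$ and using the symmetry produces a $3\times3$ linear system whose solution is
\[
  q_{2,3}=q_{3,4}=\frac{x-1}{8}-c_3,\qquad q_{2,4}=\frac{x-1}{8}+c_3-c_2-c_4,
\]
while the column-$1$ sum yields $1+c_1+c_2+c_3+c_4=V$. At this point the matrix is fully determined, but expressed through the four quantities $c_1,c_2,c_3,c_4$ (and $x$).

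The main obstacle is to collapse this list to $c_3,c_4,x$, exactly as the statement demands; this is precisely the step that lifts the argument above the purely combinatorial level of \Cref{thm:gen3}. The reduction cannot follow from equitability alone and instead rests on the quartic-cyclotomic identity $c_2+c_4=2c_3$. I would derive it from the solution-counting machinery of \Cref{lem:quo2}: by \Cref{eq:quoSol} and \Cref{eq:quo5} one has $c_i=\tfrac{1}{16(x-1)}\,|S(a_1^4+\alpha^{i-1}a_2^4-a_3^4)|$, and evaluating these counts through the classical cyclotomic numbers of order $4$ (legitimate here because $GP(x,4)$ is undirected, so $-1$ is a fourth power and $f=\tfrac{x-1}{4}$ is even) makes the parts of $c_2$ and $c_4$ that distinguish the two conjugate classes cancel, leaving $c_2+c_4=2c_3$. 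Combined with the column-$1$ relation this gives $c_2=2c_3-c_4$ and $c_1=V-1-3c_3$, whence also $q_{2,4}=\tfrac{x-1}{8}-c_3$: all three off-diagonal unknowns coincide, $c_1,c_2$ disappear, and $q(x,4)$ becomes a function of $x,c_3,c_4$ only.

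Finally I would carry out step (ii): by \Cref{thm:equi}, $\vec H=(h_1,h_2,h_3,h_4)=\vec H\!\left(\tfrac1V q(x,4)\right)=-\vec\allone\left(\tfrac1V q^- - I\right)^{-1}$, a symbolic inversion of the resulting $4\times4$ matrix (done in Mathematica). This yields $h_1=x-1$—a useful sanity check, matching $GP(x,3)$—together with the displayed $h_2,h_3,h_4$. \Cref{thm:genEqui} and \Cref{thm:equi} then give $H(GP(x,4);(v,u))=h_{f_{\alpha,4}(v,u)}$, and unwinding the definition of $f_{\alpha,4}$ (namely $f_{\alpha,4}(v,u)=i$ exactly when $u-v\in\alpha^{i-1}(\FF_x^*)^4$, and $0$ when $u=v$) produces the five-way case distinction in the statement. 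The serious content is the cyclotomic identity $c_2+c_4=2c_3$; the inversion and the closing bookkeeping are routine, if lengthy.
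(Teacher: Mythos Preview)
Your proposal is correct and lands on the same quotient matrix and the same final computation as the paper, but the route to the key identity $c_2+c_4=2c_3$ is different. You determine $q_{2,3},q_{3,4},q_{2,4}$ from the three column sums alone (which leaves $q_{2,4}$ depending on $c_2$) and then import $c_2+c_4=2c_3$ from the classical theory of cyclotomic numbers of order~$4$. The paper instead extracts one extra relation directly from the solution-counting formalism: because $x-1\equiv 0\pmod{8}$ forces $-1\in(\FF_x^*)^4$, the counts $|S(\pm a_1^4\pm\alpha^{i-1}a_2^4\pm\alpha^{j-1}a_3^4)|$ are insensitive to signs, and a single shift-and-sign manipulation gives $q_{4,2}=q_{3,2}$ outright. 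With that in hand the column-$2$ sum fixes $q_{3,2}=q_{4,2}=\tfrac{1+c_1+c_3}{2}$, the column-$4$ sum then forces $q_{3,4}$ to the same value, and the column-$3$ sum yields $c_2+c_4=2c_3$ as a \emph{corollary} (\Cref{cor:par}) rather than an input. So the paper's argument is more self-contained and elementary---it never needs cyclotomic numbers, only the fact that $-1$ is a fourth power---while your approach outsources that one step but is otherwise identical. Either way the identity is the only nontrivial ingredient; once it is in place, the matrix agrees with \Cref{lem:quoFor4} and the inversion proceeds as you describe.
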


To prove \Cref{thm:gen4}, we should know all the entries of $Q(x,4)$.
Therefore, we prove the following lemma.

\begin{lem}\label{lem:quoFor4}
  \[
    Q(x,4)=
    \begin{pmatrix}
      0&1&0&0&0\\
      \frac{x-1}{4}&c_1&c_2&c_3&c_4\\
      0&c_2&c_4&\frac{1+c_1+c_3}{2}&\frac{1+c_1+c_3}{2}\\
      0&c_3&\frac{1+c_1+c_3}{2}&c_3&\frac{1+c_1+c_3}{2}\\
      0&c_4&\frac{1+c_1+c_3}{2}&\frac{1+c_1+c_3}{2}&c_2
    \end{pmatrix}  
    \]    
\end{lem}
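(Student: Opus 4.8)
The plan is to determine $Q(x,4)$ entry by entry, since almost everything is already in hand. \Cref{lem:quo1} fixes the entire $0$th row and column, the first-row entries $q(x,4)_{1,i}=c_i$, and the symmetry $q(x,4)_{ij}=q(x,4)_{ji}$ for $i,j\ge 1$, while \Cref{lem:quo2} supplies the diagonal $q(x,4)_{2,2}=c_4$, $q(x,4)_{3,3}=c_3$, $q(x,4)_{4,4}=c_2$. Summing the column indexed by $F_1$ through \Cref{eq:deg} and invoking the symmetry records the degree identity $\frac{x-1}{4}=1+c_1+c_2+c_3+c_4$. After all this, the only undetermined entries are $q(x,4)_{2,3}$, $q(x,4)_{2,4}$, $q(x,4)_{3,4}$; the remaining positions equal these by symmetry.

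The key step is to show that these three entries coincide, and here I would reuse the solution count $S$ from the proof of \Cref{lem:quo2}. By \Cref{eq:quoSol} we have $(x-1)k^2 q(x,4)_{ij}=|S(a_1^k+\alpha^{i-1}a_2^k-\alpha^{j-1}a_3^k)|$ with $k=4$, and \Cref{eq:quo4} records that this count is unchanged when every coefficient is multiplied by a common power of $\alpha$. Since the defining condition on $GP(x,k)$ forces $-1\in(\FF_x^*)^k$, writing $-1=\delta^k$ and substituting $a_3\mapsto\delta a_3$ rewrites the counted equation in the symmetric form $a_1^k+\alpha^{i-1}a_2^k+\alpha^{j-1}a_3^k=0$. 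This count is visibly invariant under any permutation of the exponents $0,i-1,j-1$ (relabel the three variables) and, by \Cref{eq:quo4} together with $\alpha^k a^k=(\alpha a)^k$, under adding a common constant to all three exponents modulo $4$. Hence $q(x,4)_{ij}$ depends only on the multiset $\{0,i-1,j-1\}$ taken up to simultaneous translation modulo $4$, and the three multisets $\{0,1,2\}$, $\{0,1,3\}$, $\{0,2,3\}$ form a single translation orbit. Therefore $q(x,4)_{2,3}=q(x,4)_{2,4}=q(x,4)_{3,4}$.

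It remains to fix the common value $t$, which I would do with one more column sum: \Cref{eq:deg} applied to the column indexed by $F_2$ gives $c_2+c_4+2t=\frac{x-1}{4}=1+c_1+c_2+c_3+c_4$, so $t=\frac{1+c_1+c_3}{2}$, and substituting all entries yields the asserted $Q(x,4)$. I expect the equality $q(x,4)_{2,3}=q(x,4)_{2,4}=q(x,4)_{3,4}$ to be the crux: it is equivalent to the identity $c_2+c_4=2c_3$, which the degree relations cannot produce on their own, and it is exactly the translation symmetry of the count $S$ that furnishes it. Everything else is bookkeeping with the lemmas already established.
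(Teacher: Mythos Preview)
Your proof is correct and follows essentially the same approach as the paper: both use that $-1\in(\FF_x^*)^4$ to make the signs in $S$ irrelevant, then exploit the translation invariance \Cref{eq:quo4} of the count $|S(\cdot)|$, and finish with the column-sum identity from \Cref{eq:deg}. The only cosmetic difference is that the paper explicitly establishes just the single equality $q(x,4)_{3,2}=q(x,4)_{4,2}$ and then reads off $q(x,4)_{3,4}$ from a further column sum, whereas you obtain all three off-diagonal equalities $q(x,4)_{2,3}=q(x,4)_{2,4}=q(x,4)_{3,4}$ in one stroke via the orbit argument.
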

\begin{proof}
  $x-1\equiv 0 \pmod{2k}$, $-1$ is in $(\FF_x^*)^k$ and there exists the element $\beta$ in $\FF_x$ such that $\beta^k=-1$.
  Then, 
  \[
    a_1^k+\alpha^{i-1}a_2^k-\alpha^{j-1}a_3^k=0 \Longleftrightarrow -(\beta a_1)^k+\alpha^{i-1}a_2^k-\alpha^{j-1}a_3^k=0.
  \]
  Therefore, we obtain 
  \[
    |S(a_1^k+\alpha^{i-1}a_2^k-\alpha^{j-1}a_3^k)|=|S(-a_1^k+\alpha^{i-1}a_2^k-\alpha^{j-1}a_3^k)|.
  \]
  Similar reasoning can be applied to the coefficients of $a_2$ and $a_3$ as well.
  Hence, the signs of the terms in $xa_1^k+ya_2^k+za_3^k$ do not affect $|S(xa_1^k+ya_2^k+za_3^k)|$.
  In the case $k=4$, from \Cref{eq:quoSol,eq:quo4},
  \begin{align*}
    (x-1)4^2q(x,4)_{4,2}&=|S(a_1^4+\alpha a_2^4-\alpha^3a_3^4)|\\
    &=|S(\alpha a_1^4+\alpha^2 a_2^4-(\alpha a_3)^4)|\\
    &=|S(a_1^4+\alpha a_2^4-\alpha^2a_3^4)|=(x-1)4^2q(x,4)_{3,2}.
  \end{align*}  
  Therefore, $q(x,4)_{3,2}=q(x,4)_{4,2}$.
  From \Cref{eq:deg},
  \[
  \sum_{i=0}^{4}  q(x,4)_{i,2}=c_2+c_4+q(x,4)_{3,2}+q(x,4)_{4,2}=1+c_1+c_2+c_3+c_4.
  \]
  Hence, $q(x,4)_{3,2}=q(x,4)_{4,2}=\frac{1+c_1+c_3}{2}$.
  From \Cref{lem:quo1} and \Cref{lem:quo2}, we can compute the remaining entries:
  \[
  Q(x,4)=
  \begin{pmatrix}
    0&1&0&0&0\\
    \frac{x-1}{k}&c_1&c_2&c_3&c_4\\
    0&c_2&c_4&\frac{1+c_1+c_3}{2}&\frac{1+c_1+c_3}{2}\\
    0&c_3&\frac{1+c_1+c_3}{2}&c_3&\frac{1+c_1+c_3}{2}\\
    0&c_4&\frac{1+c_1+c_3}{2}&\frac{1+c_1+c_3}{2}&c_2
  \end{pmatrix}.  
  \]    
\end{proof}

Because the column sums of $Q(x,4)$ are always $\frac{x-1}{4}$, 
there are non-trivial constraints on the parameters.

\begin{cor}\label{cor:par}
  \[
  c_2+c_4=2c_3.  
  \]
\end{cor}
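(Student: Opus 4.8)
The plan is to exploit the fact, already noted just before the statement, that every column of $Q(x,4)$ must sum to the valency $\frac{x-1}{4}$ of $GP(x,4)$. This is exactly the content of \Cref{eq:deg}: for an equitable partition, the sum of the entries in the column indexed by a part $F_j$ equals the degree of any vertex in $F_j$; since $GP(x,4)$ is regular of valency $\frac{x-1}{4}$, all five column sums of the explicit matrix in \Cref{lem:quoFor4} coincide with this common value. The desired identity will then drop out by equating two suitably chosen column sums.

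First I would read off from \Cref{lem:quoFor4} the two column sums that isolate the relation, namely the columns indexed by $2$ and $3$. The column indexed by $2$ sums to $c_2 + c_4 + 2\cdot\frac{1+c_1+c_3}{2} = c_2 + c_4 + (1 + c_1 + c_3)$, while the column indexed by $3$ sums to $2c_3 + 2\cdot\frac{1+c_1+c_3}{2} = 2c_3 + (1 + c_1 + c_3)$. Next I would invoke \Cref{eq:deg} to set both of these equal to $\frac{x-1}{4}$, hence equal to one another.

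Finally, cancelling the common term $1 + c_1 + c_3$ from both sides of $c_2 + c_4 + (1+c_1+c_3) = 2c_3 + (1+c_1+c_3)$ yields $c_2 + c_4 = 2c_3$, as claimed. There is no genuine obstacle here: once \Cref{lem:quoFor4} is available, the corollary is an immediate consequence of column-sum regularity, and the only point requiring a little care is choosing a pair of columns whose sums differ precisely by $c_2 + c_4 - 2c_3$. Comparing instead the columns indexed by $1$ and $3$ gives the same conclusion after cancelling $1 + c_1$, which provides a convenient cross-check.
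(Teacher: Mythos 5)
Your proof is correct and is essentially the paper's own argument: both rest on \Cref{eq:deg} forcing all column sums of the explicit matrix in \Cref{lem:quoFor4} to equal the valency $\frac{x-1}{4}$, and then equate two column sums. The paper happens to compare the columns indexed by $1$ and $3$ (your stated cross-check) rather than $2$ and $3$, but this is an immaterial difference.
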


\begin{proof}
  \begin{align*}
    \sum_{i=0}^{r}q(x,4)_{i,1}&=\sum_{i=0}^{r}q(x,4)_{i,3}\\
    \Longleftrightarrow 1+c_1+c_2+c_3+c_4&=1+c_1+3c_3\\
    \Longleftrightarrow c_2+c_4&=2c_3.  
  \end{align*}
\end{proof}

Next, we give a proof of \Cref{thm:gen4}.

\begin{proof}[Proof of \Cref{thm:gen4}]
  From \Cref{thm:genEqui,lem:quoFor4}, $GP(x,4)$ is $f$-equitable with the following quotient matrix $q(x,4)_{ij}$:
  \[
  q(x,4)_{ij}=
  \begin{pmatrix}
    0&1&0&0&0\\
    \frac{x-1}{k}&c_1&c_2&c_3&c_4\\
    0&c_2&c_4&\frac{1+c_1+c_3}{2}&\frac{1+c_1+c_3}{2}\\
    0&c_3&\frac{1+c_1+c_3}{2}&c_3&\frac{1+c_1+c_3}{2}\\
    0&c_4&\frac{1+c_1+c_3}{2}&\frac{1+c_1+c_3}{2}&c_2
  \end{pmatrix}.
  \] 
  By computing $\vec{H}(q(x,4)_{ij})$, we have 
\begin{align*}
  (h_1,h_2,h_3,h_4)=\vec{H}(q(x,4)_{ij}).
\end{align*} 
  From \Cref{thm:equi}, because  $GP(x,4)$ is $f_{\alpha,4}$-equitable,
  \begin{align*}
    H(G;\{u,v\})&=\begin{cases}
      0 & u=v\\
      h_{f(u,v)} &u\neq v
    \end{cases}\\
    &=\begin{cases}
        0 & u-v=0\\
        h_1 & u-v\in (\FF_x^*)^4\\
        h_2& u-v\in \alpha(\FF_x^*)^4\\
        h_3& u-v\in \alpha^2(\FF_x^*)^4\\
        h_4& u-v\in \alpha^3(\FF_x^*)^4\\
      \end{cases}.     
  \end{align*}
  
\end{proof}

\begin{rem}
  We determined $P(x,4)$ and obtained \Cref{cor:par} as a side effect.
  From an algebraic perspective, the parameters of $GP(x,k)$ as a quasi-strongly regular graph can be calculated 
  using the \textit{Jacobi sum}, which is a type of character sum.
  Therefore, we may obtain relational equations for some other $k$ similar to \Cref{cor:par} using the properties of the Jacobi sum.
  If such equations are found for some other $k$, we may calculate $P(x,k)$ using only the parameters of $GP(x,k)$ as a quasi-strongly regular graph,
  and also determine the average hitting times in $GP(x,k)$.
\end{rem}

\begin{rem}
  We proved that $G$ is a distance-regular graph if and only if $G$ is a $d_G$-equitable graph.
  We also claimed that the Cartesian product of two strongly regular graphs, denoted as $G_1$ and $G_2$, is a $(d_{G_1},d_{G_2})$-equitable graph and 
  $GP(x,k)$ is an $f_{\alpha,k}$-equitable graph. 
  In these cases, $(d_{G_1},d_{G_2})$ and $f_{\alpha,k}$ are ``limited'' functions, which means that they can be defined for only specific graphs.
  On the other hand, the function $d_G$ can be defined for any graph.
  We consider that if a function $f$ can be considered on general graphs, then 
  $f$-equitable graphs may have some properties similar to distance-regular graphs,
  such as the bounds of the average hitting times, which is the result of Koolen and Markowsky~\cite{KOOLEN2016737}.

To give an example of such a function, we can consider the function $n_G(u,v)$ that represents the number of common neighbors of two vertices $u$ and $v$ in graph $G$.
  We define $G$ as being neighbor-equitable if $G$ is an $n_G$-equitable graph.
  The simplest example of neighbor-equitable is a strongly regular graph with parameters $(n,d,a,c)$, where $d,a$ and $c$ are all distinct integers.
  From this, although there are some constraints on the parameters,  neighbor-equitable graphs can also be considered as a generalization of strongly regular graphs.
  
  Let the parameters of $GP(x,k)$ as quasi-strongly regular be $(x,\frac{x-1}{k},c_1;c_2,\ldots,c_k)$.
  In fact, when $c_1,c_2,\ldots,c_k$ are all distinct integers, then $GP(x,k)$ is also neighbor-equitable.
  This is because $F_i(n)=N_{c_i}(n)$ holds for all integers $i$ and $n$ such that $0\leq i\leq k$ and $n\in \FF_x$,
  where $N_{c_i}(n)=\{v\in GP(x,k)\mid |N(n)\cap N(v)|=c_i\}$.
  It is not known what other types of neighbor-equitable graphs may exist.
  
\end{rem}

\section*{Acknowledgments}

The author thanks Professors Tsuyoshi~Miezaki, Akihiro~Munemasa, and Hiroshi~Suzuki and Ph.D. student Yuuho~Tanaka for their helpful discussions and comments.



\end{document}